\newcommand{\Ga}{\Gamma}
\newcommand{\la}{\lambda}
\newcommand{\si}{\sigma}
\newcommand{\Si}{\Sigma}
\newcommand{\Om}{\Omega}
\newcommand{\cA}{\mathcal{A}}
\newcommand{\cM}{\mathcal{M}}
\newcommand{\cN}{\mathcal{N}}
\newcommand{\cS}{\mathcal{S}}
\newcommand{\RR}{\mathbb R}
\newcommand{\rar}{\rightarrow}
\newcommand{\dd}{\operatorname{div}}
\newcommand{\sq}{\sqrt{1 + \phi^2 |\nabla f|_g^2}}
\newcommand{\sqsq}{1 + \phi^2 |\nabla f|_g^2}
\theoremstyle{plain}
\newtheorem{theorem}{Theorem}[section]
\newtheorem{lemma}[theorem]{Lemma}
\theoremstyle{definition}
\newtheorem{rema}[theorem]{Remark}
\newtheorem{defi}[theorem]{Definition}
\numberwithin{equation}{section}
\title[On the Penrose Inequality for Charged Black Holes]{On the Penrose Inequality for Charged Black Holes}
\author[Disconzi]{Marcelo M. Disconzi}
\address{Department of Mathematics\\
Stony Brook University\\ Stony Brook, NY 11794}
\curraddr{Department of Mathematics\\
Vanderbilt University\\
Nashville, TN 37240}
\email{disconzi@math.sunysb.edu, marcelo.disconzi@vanderbilt.edu}
\author[Khuri]{Marcus A. Khuri}
\address{Department of Mathematics\\
Stony Brook University\\ Stony Brook, NY 11794}
\email{khuri@math.sunysb.edu}
\thanks{The second author is partially supported by
NSF Grant DMS-1007156 and a Sloan Research Fellowship.}
\begin{document}

\begin{abstract}
In \cite{BK1} and \cite{BK2}, Bray and Khuri outlined an approach to prove the Penrose inequality for general
initial data sets of the Einstein equations. In this paper we extend this approach so that it may be applied
to a charged version of the Penrose inequality. Moreover, assuming that the initial data is time symmetric,
we prove the rigidity statement in the case of equality for the charged Penrose inequality, a result which
seems to be absent from the literature. A new quasi-local mass, tailored to charged initial data sets is
also introduced, and used in the proof.
\end{abstract}

\maketitle

\tableofcontents

\section{Introduction}
Despite its tremendous success, the question of whether General Relativity
provides a consistent (classical) physical theory remains open\footnote{See \cite{Da} for
an overview which includes the experimental confirmations of the theory;
an account of recent advances in its mathematical aspects can be found in \cite{CGD}.}. Physical consistency, here, should
be understood as the statement that a large class of initial states, ``generic'' in some sense,
do not evolve into unphysical space-times. Given that the
singularity theorems \cite{HE} ensure that the time evolution of physically reasonable initial data sets
can be geodesically incomplete, one expects that such singularities cannot be observed from the asymptotic region if
the theory is capable of making sound predictions. This idea is usually referred to as the
(weak) Cosmic Censorship Conjecture, and it can be stated as follows \cite{C,GH}.

\emph{The maximal Cauchy development of generic asymptotically flat initial data for the Einstein
equations, with physically reasonable sources, possesses a complete $\mathscr{I}^+$.}

The solution of this conjecture turns out to be a very difficult problem, as it requires
understanding the full time development of the space-time. Hence it is pragmatic to investigate
a more simple and related issue, which can be addressed entirely from the point of view of a Cauchy surface.
More precisely, one seeks to establish properties of initial data that ought to
be satisfied if the conjecture is true. In this regard, heuristic arguments of
Penrose \cite{Pen} suggest that, for physically reasonable initial data sets,
the validity of weak Cosmic Censorship implies the following inequality
\begin{gather}
E_{ADM} \geq \sqrt{\frac{\cA}{16 \pi}},
\label{Penrose_simple}
\end{gather}
where $E_{ADM}$ is the ADM energy \cite{ADM} and $\cA$ is the
minimal area required to enclose the apparent horizon. This inequality is known as the Penrose inequality, and it
is usually accompanied by a rigidity statement, very much like the rigidity statement of the
positive mass theorem \cite{SY2,SY3, W}, saying that if equality happens then
the initial data embed into the Schwarzchild space-time.
The Penrose inequality, along with the rigidity statement, has been established in the time-symmetric case
by Huisken and Ilmanen \cite{HI} for one black hole, and independently by Bray \cite{Bray1} for
multiple black holes.

Inequality (\ref{Penrose_simple}) admits several
generalizations, such as the inclusion of charge, and the inclusion of angular
momentum. As these generalizations describe natural and relevant
physical scenarios, the validity of the Penrose inequality in such cases is also an important test for the
Cosmic Censorship conjecture. See \cite{Mar} for an account of the current status of this topic.

The present work is concerned with an initial data set $(\cM, g, k, E)$ for a single electrically charged black hole,
in which case the Penrose inequality reads
\begin{gather}
 E_{ADM} \geq \sqrt{ \frac{\cA }{16\pi} } + \sqrt{ \frac{\pi}{\cA } } e^2,
\label{Penrose_charged}
\end{gather}
where $e=\lim_{r\rightarrow\infty}\frac{1}{4\pi}\int_{S_{r}}E^{i}\nu_{i}$ is the total electric charge, with $S_{r}$ coordinate spheres in
the asymptotic end having unit outer normal $\nu$. Under appropriate energy conditions, inequality (\ref{Penrose_charged})
has been proven by Jang for time-symmetric
initial data under the assumption that a smooth solution to the Inverse Mean Curvature Flow (IMCF)
exists \cite{Ja2}. However in light of Huisken and Ilmanen's work \cite{HI}, the hypothesis of a smooth IMCF can be discarded.
A proof of this inequality in the non-time-symmetric, spherically symmetric case has also be given in \cite{Hay}.
To the best of our knowledge, the rigidity statement in the case of equality does not appear in the literature
and does not follow directly from Jang's original argument.
In this paper, we show that under the assumption of a solution to the coupled Jang-IMCF
system of equations \cite{BK1,BK2},
(\ref{Penrose_charged}) is valid in the non-time-symmetric case. We also establish the rigidity
statement both when $k=0$ and $k \neq 0$, and in the former case without the assumption of a solution
to the Jang-IMCF system. Related results for the positive mass theorem with charge are proven in \cite{KW}.

\begin{theorem}
Let $(\cM, g, k, E)$ be a $3$-dimensional, asymptotically flat initial data set for the
Einstein-Maxwell system with a
connected outermost apparent horizon boundary $\partial \cM$, where $E$ is the electric field.
Assume that the charge density is zero $\dd E = 0$, that the magnetic field vanishes, and that the non-electromagnetic matter fields
satisfy the dominant energy condition.
If the coupled Jang-IMCF system of equations admits a solution
with a weak IMCF (in the sense of \cite{HI}) such that the boundary of the Jang surface is minimal,
then (\ref{Penrose_charged}) holds and if equality is attained the initial data
arise from the Reissner-Nordstrom spacetime. Furthermore, if $k=0$, the same conclusions hold without
the assumption concerning solutions to the Jang-IMCF system.
\label{Penrose_inequality_theo}
\end{theorem}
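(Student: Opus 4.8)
The plan is to adapt the Geroch--Huisken--Ilmanen monotonicity of the Hawking mass under inverse mean curvature flow (IMCF) to the charged setting, running the flow not on $(\cM,g)$ itself but on the generalized Jang deformation $(\cM,\bar g)$, $\bar g=g+\phi^2\,df\otimes df$, whose existence with the stated boundary behaviour is the hypothesis. The quantity to monitor is a \emph{charged Hawking mass}
\[
m_{H,e}(\Si)=\sqrt{\frac{|\Si|}{16\pi}}\Big(1-\frac{1}{16\pi}\int_{\Si}H^2\Big)+\sqrt{\frac{\pi}{|\Si|}}\,e(\Si)^2,\qquad e(\Si)=\frac{1}{4\pi}\int_{\Si}E^i\nu_i .
\]
Since $\dd E=0$, the flux $e(\Si)$ equals the total charge $e$ for every $\Si$ homologous to the sphere at infinity, so at a minimal surface of area $\cA$ one gets $m_{H,e}=\sqrt{\cA/16\pi}+\sqrt{\pi/\cA}\,e^2$, the right-hand side of (\ref{Penrose_charged}); along the flow the charge term tends to $0$ and $m_H\to E_{ADM}$ because the Jang metric is asymptotic to $g$ and hence carries the same ADM energy. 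So it suffices to show $m_{H,e}(\Si_t)$ is non-decreasing along the weak IMCF of $(\cM,\bar g)$ (in the sense of \cite{HI}) starting from the assumed minimal Jang boundary $\Si_0$, whose $\bar g$-area is $\cA$.

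The monotonicity rests on the Bray--Khuri formula for the scalar curvature of the generalized Jang metric,
\[
\bar R=16\pi\big(\mu-J(w)\big)+\big|\bar A-k\big|^2_{\bar g}+2|\bar q|^2_{\bar g}-\frac{2}{\phi}\,\dd_{\bar g}(\phi\,\bar q),
\]
together with the observation that the vanishing of the magnetic field, $\dd E=0$, the dominant energy condition for the non-electromagnetic matter, and $|w|_{\bar g}<1$ for a Jang solution give $16\pi(\mu-J(w))\ge 16\pi\mu_{EM}=2|E|^2$. Substituting the resulting bound $\bar R\ge 2|E|^2+|\bar A-k|^2+2|\bar q|^2-\tfrac2\phi\dd_{\bar g}(\phi\bar q)$ into Geroch's identity
\[
\frac{d}{dt}m_H(\Si_t)\ge\frac{\sqrt{|\Si_t|}}{(16\pi)^{3/2}}\int_{\Si_t}\Big(2H^{-2}|\nabla_{\Si}H|^2+\big(|A_{\Si}|^2-\tfrac12 H^2\big)+\bar R\Big),
\]
the divergence term is absorbed exactly as in \cite{BK1,BK2}: the auxiliary equation for $\phi$ in the coupled Jang--IMCF system is designed precisely so that $-\tfrac2\phi\dd_{\bar g}(\phi\bar q)$ combines, after integration by parts along the flow, with $2|\bar q|^2$ into a non-negative integrand plus boundary terms that vanish by the minimality of $\partial\cM$ and by asymptotic flatness — and this uses only $f,\phi,k$, so the added charge density leaves it undisturbed. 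What survives is $2\int_{\Si_t}|E|^2$, and Cauchy--Schwarz applied twice yields $\int_{\Si_t}|E|^2\ge 16\pi^2 e^2/|\Si_t|$, whence $\tfrac{d}{dt}m_H(\Si_t)\ge e^2/(4r(\Si_t))$ with $r(\Si_t)=\sqrt{|\Si_t|/4\pi}$; since $\tfrac{d}{dt}\big(\sqrt{\pi/|\Si_t|}\,e^2\big)=-e^2/(4r(\Si_t))$ along IMCF, this is exactly the monotonicity of $m_{H,e}$. The weak formulation of \cite{HI} applies essentially verbatim, as the extra term $2|E|^2\ge 0$ only helps and the jumps of the weak flow are controlled as there, while connectedness of $\partial\cM$ together with $\dd E=0$ prevents the charge from redistributing badly; when $k=0$ the same computation run directly on $(\cM,g)$ — where the time-symmetric Hamiltonian constraint already gives $R_g\ge 2|E|^2$ — needs no Jang step, which is why that case is unconditional.

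For the rigidity statement, equality in (\ref{Penrose_charged}) forces $m_{H,e}(\Si_t)$ to be constant, hence every non-negative term above vanishes for a.e.\ $t$ and both Cauchy--Schwarz inequalities hold with equality: the leaves $\Si_t$ are totally umbilic with constant mean curvature, $\bar q=0$, $\bar A=k$, the non-electromagnetic matter vanishes, and $E$ is normal to each $\Si_t$ with $|E|\equiv e/r^2$, so that $\bar R=2e^2/r^4$. Umbilicity together with constancy of $H$ along the leaves makes the IMCF homothetic, so the induced metrics obey $h_t=e^t h_0$ and $\bar g=\Phi(r)^{-1}dr^2+r^2\,r_0^{-2}h_0$ in the area radius $r$; since $\bar R=2e^2/r^4$ depends on $r$ alone, $h_0$ must have constant curvature, i.e.\ $\Si_0$ is a round sphere, and the scalar curvature equation then integrates to $\bar g=V(r)^{-1}dr^2+r^2 g_{S^2}$ with $V=1-2m/r+e^2/r^2$, $m=E_{ADM}$. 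Thus $(\cM,\bar g)$ is isometric to the exterior region of a spacelike Reissner--Nordstrom slice, the minimality of $\partial\cM$ pinning its area radius at $r_+=m+\sqrt{m^2-e^2}$. If $k=0$ this already identifies $(\cM,g,0,E)$ with the $t=0$ slice of Reissner--Nordstrom; if $k\neq0$ one uses in addition $\bar q=0$, $\bar A=k$ and the Jang equation to recognise $f$ as the graph function exhibiting $(\cM,g,k,E)$ as a spacelike slice of that spacetime.

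The step I expect to be the main obstacle is exactly this last one: deducing, from the rigidity data on the Jang surface ($\bar g$ equal to a Reissner--Nordstrom metric, $\bar A=k$, $\bar q=0$), that the original data embed in the Reissner--Nordstrom spacetime in the non-time-symmetric case — the point said to be missing from the literature, which requires carefully reversing the generalized Jang construction rather than any new estimate. A secondary difficulty is that in the equality case one must first upgrade the weak, possibly jumping, IMCF to a genuine smooth umbilic foliation before the above ODE rigidity can be invoked, ruling out jumps by the strict gain in $m_{H,e}$ that a jump would otherwise produce.
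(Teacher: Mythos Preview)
Your outline is close to the paper's strategy and gets the architecture right, but there is one genuine gap in the inequality step. You run the IMCF in $(\cM,\bar g)$ and then write ``Cauchy--Schwarz applied twice yields $\int_{\Si_t}|E|^2\ge 16\pi^2 e^2/|\Si_t|$''. The quantity $|E|^2$ that appears in the scalar curvature formula for $\bar R$ is $|E|_g^2$, computed with the \emph{original} metric, whereas the area $|\Si_t|$, the surface measure, and the unit normal in your flux are all with respect to $\bar g$. For the Cauchy--Schwarz/divergence-theorem chain to go through you need a vector field $\overline{E}$ on the Jang surface with three properties: $|E|_g\ge |\overline{E}|_{\bar g}$, $\overline{\dd}\,\overline{E}=0$, and total $\bar g$-charge equal to $e$. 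Your appeal to $\dd_g E=0$ does not supply this, since $\dd_{\bar g}E\neq \dd_g E$ in general and the $\bar g$-flux of $E$ through $\Si_t$ is not obviously $4\pi e$. The paper's new ingredient here is to \emph{define} $\overline{E}$ as the electric field induced on the Jang graph by the natural field strength $F$ on $(\cM\times\RR,\,g+\phi^2 dt^2)$, and to verify the three properties above (the second via a lemma showing $\dd_g E=\sqrt{1+\phi^2|\nabla f|_g^2}\,\overline{\dd}\,\overline{E}$). Without this auxiliary field, the crucial inequality $\int_{\Si_t}|E|_g^2\ge 16\pi^2 e^2/|\Si_t|_{\bar g}$ is unjustified.

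A smaller inaccuracy: the divergence term $-\tfrac{2}{\phi}\overline{\dd}(\phi q)$ is not absorbed by combining with $2|q|^2$ into something non-negative, as you suggest. Rather, the specific choice $\phi=\sqrt{|\cS_\tau|_{\bar g}/16\pi}\,\overline{H}$ is made so that, after integrating $\tfrac{d}{dt}M_{CH}$ in $t$ and applying the coarea formula, the prefactor cancels against $\phi$ and one is left with $\int_\Si \overline{\dd}(\phi q)$, a pure boundary term that vanishes because $\phi=0$ on $\cS_0$ and $q\to 0$ at infinity; the term $2|q|^2$ is simply dropped. Your rigidity sketch is otherwise in line with the paper, though note that to embed $(\cM,g,k,E)$ in Reissner--Nordstrom one must also check $\phi=\phi_{RN}$ (immediate from the formula for $\phi$ once $\bar g=g_{RN}$) and that the given $E$ coincides with the electric field induced on the graph $t=f(x)$ inside the Lorentzian Reissner--Nordstrom spacetime; the latter is a separate computation inverting the Riemannian construction of $\overline{E}$.
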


The hypothesis of connectedness of $\partial \cM$ is necessary, as inequality
(\ref{Penrose_charged}) is known to be false when multiple black holes are present \cite{WY}.
Similarly, the inequality need not be true if electromagnetic sources are allowed
outside the black hole \cite{Hay}.

Theorem \ref{Penrose_inequality_theo} is established in Section \ref{proof}, with
the rigidity statement shown in Section \ref{case_equality}. Its proof
makes use of a new quasi-local mass which generalizes the Hawking mass for
charged initial data. This mass is introduced in Section \ref{charged_Hawking_mass_section},
where it is motivated from elementary principles. In Section \ref{gen_Jang_surface_section} we
recall, along with some important definitions, how to construct a generalized
Jang surface from the ideas of \cite{BK1,BK2}.

\section{Charged Jang Deformation\label{gen_Jang_surface_section}}

Let $(\cM, g, k, E)$ be a $3$-dimensional initial data set, consisting of a Riemannian metric $g$, a symmetric 2-tensor $k$,
and a vector field $E$ representing the electric field. It is assumed that the manifold has a single component boundary consisting of an
apparent horizon and that there are no other apparent horizons present. Moreover the data are taken to be asymptotically flat with one end, in that
outside a compact set the manifold is diffeomorphic to the complement of a ball in $\mathbb{R}^{3}$, and in the coordinates given
by this asymptotic diffeomorphism the following fall-off conditions hold
\begin{equation*}
|\partial^{m}(g_{ij}-\delta_{ij})|=O(|x|^{-m-1}),\text{ }\text{
}\text{ }|\partial^{m}k_{ij}|=O(|x|^{-m-2}),\text{ }\text{
}\text{ }|\partial^{m}E^{i}|=O(|x|^{-m-2}),\text{ }\text{ }\text{ }m=0,1,2,\text{ }\text{ }\text{as}\text{ }\text{
}|x|\rightarrow\infty.
\end{equation*}
With a vanishing magnetic field, the matter and current densities for the non-electromagnetic matter fields are given by\footnote{We use conventions where the right hand side of the Einstein
equations does not have $8\pi$, and we set $G_N = c =1$.}
\begin{align}
\begin{split}
 2 \mu  & = R + (Tr k)^2 - |k|_{g}^2 - 2|E|_{g}^2, \\
J & = \dd (k - (Tr k)g),
\nonumber
\end{split}
\end{align}
where $R$ denotes the scalar curvature of $g$. The following inequality will be referred to as the dominant energy condition
\begin{gather}
\mu \geq |J|_{g}.
\label{dec}
\end{gather}
In the time-symmetric case when $k=0$, this condition states that
\begin{equation}\label{1}
R\geq2|E|_{g}^{2},
\end{equation}
and is heavily relied upon in the proof of the
charged Penrose inequality. In fact the main difficulty in extending the proof to the non-time-symmetric case, is the lack of this
inequality under the dominant energy condition assumption. For this reason we seek a deformation of the initial data to a new set
$(\Sigma,\overline{g},\overline{E})$, where $\Sigma$ is diffeomorphic to $\mathcal{M}$, and the metric $\overline{g}$ and vector field
$\overline{E}$ are related to $g$ and $E$ in a precise way described below. The purpose of the deformation is to obtain new initial
data which satisfies \eqref{1} in a weak sense, while preserving all other quantities appearing in the charged Penrose inequality,
such as the charge density, total charge, ADM energy, and boundary area.

Consider the warped product 4-manifold $(\cM \times \RR, \, g + \phi^2 dt^2)$, where $\phi$ is a nonnegative function to be chosen appropriately. Let $\Sigma=\{t=f(x)\}$ be
the graph of a function $f$ inside this warped product setting, then the induced metric on $\Sigma$ is given by $\overline{g}=g+\phi^{2}df^{2}$. In \cite{BK1,BK2} it is
shown that in order to obtain the most desirable positivity property for the scalar curvature of the graph, the function $f$ should satisfy
\begin{gather}
 \left( g^{ij} - \frac{\phi^2 f^i  f^j}{1 + \phi^2 |\nabla f|_g^2 }\right)
\left( \frac{ \phi \nabla_{ij}f
+ \phi_i f_j +  \phi_j  f_i}{ \sqrt{1 + \phi^2 |\nabla f|_g^2 }}
-k_{ij} \right) = 0,
\label{generalized_Jang_equation}
\end{gather}
where $\nabla$ denotes covariant differentiation with respect to the metric $g$, $f_{i}=\partial_{i}f$,
and $f^{i}=g^{ij}f_{j}$. Equation (\ref{generalized_Jang_equation}) is referred to as the generalized Jang equation, and when it is
satisfied $\Sigma$ will be called the Jang surface. This equation is quasi-linear elliptic, and degenerates when either $\phi=0$
or $f$ blows-up. The existence, regularity, and blow-up behavior for the generalized Jang equation is studied at length in \cite{HK}.
The scalar curvature of the Jang surface \cite{BK1,BK2} is given by
\begin{equation}\label{5}
\overline{R}=2(\mu-J(w))+2|E|_{g}^{2}+
|h-k|_{\overline{g}}^{2}+2|q|_{\overline{g}}^{2}
-2\phi^{-1}\overline{\dd}(\phi q),
\end{equation}
here $\overline{\dd}$ is the divergence operator with respect to $\overline{g}$,
$h$ is the second fundamental form of the graph $t=f(x)$ in the Lorentzian 4-manifold $(\cM \times \RR, \, \overline{g}-\phi^2 dt^2)$,
and $w$ and $q$ are 1-forms given by
\begin{equation*}\label{6}
h_{ij}=\frac{ \phi \nabla_{ij}f
+ \phi_i f_j +  \phi_j  f_i}{ \sqrt{1 + \phi^2 |\nabla f|_g^2 }},\text{ }\text{ }\text{ }\text{ }
w_{i}=\frac{\phi f_{i}}{\sqrt{1+\phi^{2}|\nabla f|^{2}}},\text{
}\text{ }\text{ }\text{ }
q_{i}=\frac{\phi f^{j}}{\sqrt{1+\phi^{2}|\nabla f|^{2}}}(h_{ij}-k_{ij}).
\end{equation*}

If the dominant energy condition is satisfied,
then all terms appearing on the right-hand side of \eqref{5} are
nonnegative, except possibly the last term.
However the last term has a special structure, and in many applications
it is clear that a specific choice
of $\phi$ will allow one to `integrate away' this divergence term, so
that in effect the scalar curvature
is weakly nonnegative (that is, nonnegative when integrated against
certain functions). For the charged Penrose inequality, a stronger condition than simple nonnegativity is required, more precisely
we seek an inequality (holding in the weak sense) of the following form
\begin{equation}\label{7}
\overline{R}\geq 2|\overline{E}|_{\overline{g}}^{2},
\end{equation}
where $\overline{E}$ is an auxiliary electric field defined on the Jang surface. This auxiliary electric field is required to satisfy three properties,
namely
\begin{equation}\label{8}
|E|_{g}\geq|\overline{E}|_{\overline{g}},\text{ }\text{ }\text{ }\text{ }\overline{\dd} \, \overline{E}=0,\text{ }\text{ }\text{ }\text{ }\overline{e}=e,
\end{equation}
where $\overline{e}$ is the total charge defined with respect to $\overline{E}$. In particular, if the first inequality of \eqref{8} is satisfied, then the dominant
energy condition \eqref{dec} and the scalar curvature formula \eqref{5} imply that \eqref{7} holds weakly. It turns out that there is a very natural choice for
this auxiliary electric field, namely $\overline{E}$ is the induced electric field on the Jang surface $\Sigma$ arising from the field strength $F$ of the electromagnetic
field on $(\cM \times \RR, \, g + \phi^2 dt^2)$. More precisely $\overline{E}_i = F(N,X_i)$,
where $N$ and $X_i$ are respectively the unit normal and canonical tangent vectors to $\Si$
\begin{equation*}\label{9}
N=\frac{\phi^{-1}\partial_{t}-\phi f^{i}\partial_{i}}{\sqrt{1+\phi^{2}|\nabla f|_{g}^{2}}},\text{ }\text{ }\text{ }\text{ }X_{i}=\partial_{i}+f_{i}\partial_{t},
\end{equation*}
and $F = \frac{1}{2} F_{ab}dx^a\wedge dx^b$ is given by
$F_{0i} = \phi E_i$ and $F_{ij} = 0$ for $i=1,2,3$, with $x^i$, $i=1,2,3$ coordinates on $\cM$ and $x^{0}=t$. In matrix form
\begin{gather}
F = \left(
\begin{array}{cccc}
  0 & \phi E_1 & \phi E_2 & \phi E_3 \\
- \phi E_1 & 0 & 0 & 0 \\
- \phi E_2 & 0 & 0 & 0 \\
- \phi E_3 & 0 & 0 & 0
 \end{array}
\right).
\nonumber
\end{gather}
In the appendix it is shown that
\begin{gather}
\overline{E}_i = \frac{E_i + \phi^2 f_i f^j E_j}{\sqrt{1 + \phi^2 |\nabla f|^2_g}},
\label{definition_E_bar}
\end{gather}
and that all the desired properties of \eqref{8} hold.

\begin{defi}
When $f$ solves \eqref{generalized_Jang_equation} and $\overline{E}$ is given by \eqref{definition_E_bar}, the
triple $(\Sigma,\overline{g},\overline{E})$ is referred to as charged Jang initial data.
\end{defi}

In order to apply these constructions to the charged Penrose inequality, we need not only the (weak version of) inequality \eqref{7},
but also three other properties of the charged Jang initial data. Let $S_{0}\subset\mathcal{M}$ denote
the outermost minimal area enclosure of $\partial\mathcal{M}$ (according to \cite{BT,HI}, $S_{0}$ exists, is unique, and is $C^{1,1}$),
and let $\mathcal{S}_{0}$ be the vertical lift of $S_{0}$ to $\Sigma$. Then the desired three properties are
\begin{equation}\label{10}
\overline{E}_{ADM}=E_{ADM},\text{ }\text{ }\text{ }\text{ }\text{ }|\mathcal{S}_{0}|_{\overline{g}}=|S_{0}|_{g}=:\mathcal{A},
\text{ }\text{ }\text{ }\text{ }\text{ }\overline{H}_{\mathcal{S}_{0}}=0,
\end{equation}
where $\overline{E}_{ADM}$ is the ADM energy of the Jang metric $\overline{g}$, and $|\mathcal{S}_{0}|_{\overline{g}}$ and $\overline{H}_{\mathcal{S}_{0}}$
are the area and mean curvature of the surface $\mathcal{S}_{0}$, respectively. The first of these equalities is achieved by imposing zero
Dirichlet boundary conditions for $f$ at spatial infinity. More precisely, if
\begin{equation}\label{11}
\phi(x)=1+\frac{C}{|x|}+O\left(\frac{1}{|x|^{2}}\right)
\text{ }\text{
}\text{ as }\text{ }\text{ }|x|\rightarrow\infty
\end{equation}
for some constant $C$, then according to \cite{HK}
\begin{equation}\label{12}
|\nabla^{m}f|(x)=O(|x|^{-\frac{1}{2}-m})\text{ }\text{ }\text{
as }\text{ }\text{ }|x|\rightarrow\infty,\text{ }\text{ }\text{
}\text{ }m=0,1,2,
\end{equation}
which is enough to ensure that the two ADM energies agree. The second equality of \eqref{10} may be obtained by imposing zero Dirichlet boundary conditions
for the warping factor $\phi|_{S_{0}}=0$ at the surface $S_{0}$. Notice that this conclusion should hold whether $f$ blows-up or
does not blow-up at $S_{0}$, since when blow-up occurs the Jang surface asymptotically approaches a cylinder over the blow-up region. It is well-known that
the Jang surface can only blow-up on the portion of $S_{0}$ which coincides with the apparent horizon boundary.
Lastly, the third equality of \eqref{10} is considered to be an appropriate boundary condition for the solutions of the generalized Jang equation \eqref{generalized_Jang_equation}.
Typically, on the portion of $S_{0}$ which coincides with the apparent horizon boundary, this boundary condition forces the solution $f$ to blow-up as just described, however this is not always the case.

The question now arises as to how one should choose the warping factor $\phi$. Let $\{\cS_\tau \}_{\tau= 0}^{\infty}$ be an IMCF inside the Jang surface $(\Si,\, \bar{g} )$,
starting from the minimal surface $\cS_0$. We claim that $\mathcal{S}_{0}$ is outerminimizing in $\Sigma$. To see this, consider a surface $\mathcal{S}\subset\Sigma$ which
encloses $\mathcal{S}_{0}$, and let $S\subset\mathcal{M}$ be its vertical projection. Then since $\overline{g}\geq g$ as positive definite matrices and $S_{0}$ is a minimal
area enclosure of $\partial\mathcal{M}$
\begin{equation*}\label{14}
|\mathcal{S}|_{\overline{g}}\geq |S|_{g}\geq |S_{0}|_{g}=|\mathcal{S}_{0}|_{\overline{g}},
\end{equation*}
where \eqref{10} has also been used. This shows that it is appropriate to start the IMCF from $\mathcal{S}_{0}$. We then set
\begin{gather}
\phi = \sqrt{\frac{|\cS_\tau|_{\overline{g}}}{16\pi} } \, \overline{H}_{\tau},
\label{definition_phi}
\end{gather}
where $|\mathcal{S}_{\tau}|_{\overline{g}}$ is the area of $\mathcal{S}_{\tau}$ and $\overline{H}_{\tau}$ is its mean curvature. This coincides with one of the choices for $\phi$ in
\cite{BK1,BK2}, where it is pertinent to the study of the Penrose inequality for general initial data. With this definition of $\phi$, the generalized Jang equation \eqref{generalized_Jang_equation}
is coupled in a nontrivial way to the IMCF equations. In this paper we will assume that a solution, with appropriate regularity, exists for this coupled Jang-IMCF system of equations such
that \eqref{10}, \eqref{11}, and \eqref{12} are valid. The appropriate regularity should be akin to the weak solution of IMCF appearing in the work of Huisken and Ilmanen \cite{HI}, and which
is sufficient for the arguments of Section \ref{proof} to go through. We point out that a smooth solution to this system of equations exists in the case of spherical symmetry \cite{BK2},
and a further discussion of this system appears in \cite{BK1}.

\section{Charged Hawking Mass\label{charged_Hawking_mass_section}}

Let us start introducing a new quasi-local mass tailored to initial data for the Einstein-Maxwell system, and which will play an important role
in establishing Theorem \ref{Penrose_inequality_theo}. The idea to derive this mass is as follows. Consider a spherically symmetric line element
\begin{gather}
\gamma = ds^2 + r^2(s) d\si^2 = \frac{1}{r_{,s}^2} dr^2 + r^2 d\si^2,
\label{spherically_sym_g}
\end{gather}
where $r_{,s}=\frac{dr}{ds}$ and $d\sigma^{2}$ is the round metric on $\mathbb{S}^{2}$, and recall the metric on the
$t=0$ slice of the Reissner-Nordstrom spacetime
\begin{gather}
g_{RN} = \left( 1 - \frac{2M}{r} + \frac{e^2}{r^2} \right)^{-1} dr^2 + r^2 d\si^2.
\label{RN_slice}
\end{gather}
By attempting to transform \eqref{spherically_sym_g} into the form \eqref{RN_slice}, we find that
\begin{equation*}
\gamma = \frac{1}{r_{,s}^2} dr^2 + r^2 d\si^2 = \left( 1 - \frac{2M(s)}{r} + \frac{e^2}{r^2} \right)^{-1} dr^2 + r^2 d\si^2,
\end{equation*}
where
\begin{gather}
M(s) = \frac{1}{2}r\left( 1 + \frac{e^2}{r^2} - r_{,s}^2 \right).
\nonumber
\end{gather}
Since the mean curvature and area of coordinate spheres in the metric $\gamma$ are given by
\begin{gather}
H = \frac{2r_{,s}}{r},\text{ }\text{ }\text{ }\text{ } | \partial B_r | = 4\pi r^2  = \int_{\partial B_r} r^2 d\si,
\label{mean_curvature_rotational}
\end{gather}
we obtain
\begin{gather}
M(r) = \sqrt{\frac{|\partial B_r|}{16\pi} }
\left(1 + \frac{4\pi e^2}{|\partial B_r|} - \frac{1}{16\pi}\int_{\partial B_r} H^2 \right).
\nonumber
\end{gather}

This motivates the following definition.

\begin{defi}
Given initial data $(\mathcal{M},g,E)$ for the Einstein-Maxwell equations and a closed 2-surface $\cS \subset \cM$, the charged
Hawking mass is defined to be
\begin{gather}
M_{CH}(\cS) = \sqrt{\frac{|\cS|}{16\pi} } \left(1 + \frac{4\pi e^2}{|\cS|} - \frac{1}{16\pi}\int_\cS H^2 \right),
\nonumber
\end{gather}
where $H$ is the mean curvature of $\mathcal{S}$ and $e=\frac{1}{4\pi}\int_{\mathcal{S}}E^{i}\nu_{i}$, with $\nu$ the unit normal pointing
towards spatial infinity. If $\mathcal{S}$ bounds a volume, then $e$ is the total charge contained within $\mathcal{S}$.
\end{defi}

Notice that $M_{CH}$ reduces to the ordinary Hawking mass when $e=0$. It should also be pointed out that
this definition agrees in spherical symmetry with the mass introduced in \cite{Hay}, although the derivation here is different.

\section{Proof of Theorem \ref{Penrose_inequality_theo}\label{proof}}

In this section we will establish inequality (\ref{Penrose_charged}) under the assumptions of Theorem \ref{Penrose_inequality_theo}. For the time being,
let us assume that there is a smooth IMCF $\{\mathcal{S}_{\tau}\}_{\tau=0}^{\infty}$, inside a Jang surface $\Sigma$ satisfying properties \eqref{10}.
Standard properties of the IMCF \cite{HI} imply that
\begin{align}
\begin{split}
\frac{d}{dt} M_{CH}(\cS_\tau) =& -\frac{1}{2}\sqrt{\frac{\pi}{|\cS_\tau|_{\overline{g}} }} e^2 +
\sqrt{\frac{|\cS_\tau|_{\overline{g}}}{16\pi}} \left( \frac{1}{2} - \frac{1}{4} \chi(\cS_\tau) \right) \\
&+ \frac{1}{16\pi}\sqrt{\frac{|\cS_\tau|_{\overline{g}}}{16\pi}}
\int_{\cS_\tau} \left( 2 \frac{ |\nabla_{\cS_\tau} \overline{H}|^2 }{\overline{H}^2}
+ |\overline{A}|^2 -\frac{1}{2}\overline{H}^2 + \overline{R} \right) ,
\label{derivative_charged_Hawking_mass}
\end{split}
\end{align}
where $\overline{A}$ and $\overline{H}$  are, respectively, the second fundamental form and
mean curvature of $\cS_\tau$, and $\chi$ is the Euler characteristic.
Since
\begin{gather}
|\overline{A}|^2 -\frac{1}{2}\overline{H}^2 =
\frac{1}{2}(\la_1 - \la_2 )^2,
\nonumber
\end{gather}
where $\la_i$, $i=1,2$, are the principal curvatures of $\cS_\tau$, this term is nonnegative. The same
holds for the second term on the right hand side of
(\ref{derivative_charged_Hawking_mass}) as $\chi(\cS_\tau) \leq 2$ \cite{HI}. Therefore
(\ref{derivative_charged_Hawking_mass}) combined with \eqref{5} gives
\begin{gather}
\frac{d}{dt} M_{CH}(\cS_\tau) \geq
-\frac{1}{2}\sqrt{\frac{\pi}{|\cS_\tau|_{\overline{g}} }} e^2 +
\frac{1}{16\pi} \sqrt{\frac{|\cS_\tau|_{\overline{g}}}{16\pi}} \int_{\cS_\tau}
\left( 2|E|_g^2 -  \frac{2}{\phi} \overline{\dd}(\phi q) \right)
\nonumber
\end{gather}
where the dominant energy condition (\ref{dec}) and the fact that $|w|_g \leq 1$ have
been used. From \eqref{8} and Holder's inequality it follows that
\begin{gather}
\int_{\cS_\tau} |E|_g^2 \geq \int_{\cS_\tau} |\overline{E}|_{\overline{g}}^2
\geq \int_{\cS_\tau} \big \langle \overline{E}, \nu_{\overline{g}} \rangle^2
\geq
\frac{\left( \int_{\cS_\tau} \langle \overline{E}, \nu_{\overline{g}} \rangle \right)^2}{ |\cS_\tau|_{\overline{g}} },
\label{int_E_square_inequality}
\end{gather}
where $\nu_{\overline{g}}$ in the unit outer normal to $\mathcal{S}_{\tau}$.
Applying the divergence theorem on the region $\Omega\subset\Si$ between $\cS_\tau$ and spatial infinity, and using \eqref{8},
produces
\begin{gather}
\int_{\cS_\tau} \langle \overline{E}, \nu_{\overline{g}} \rangle = - \int_{\Om} \overline{\dd} \, \overline{E}+4\pi\overline{e} = 4\pi e.
\nonumber
\end{gather}
Hence
\begin{gather}
\frac{d }{dt}M_{CH}(\cS_\tau) \geq
- \frac{1}{16\pi}\sqrt{\frac{|\cS_\tau|_{\overline{g}}}{16\pi}} \int_{\cS_\tau}
\frac{2}{\phi} \overline{\dd}(\phi q) .
\label{inequality_with_div}
\end{gather}

The next step will be to integrate the above inequality between zero and infinity. Observe that since
\begin{gather}
M_{CH}(\cS_{\tau}) = \sqrt{\frac{\pi}{|\cS_{\tau}|_{\overline{g}}}}e^2 + M_H(\cS_{\tau})
\nonumber
\end{gather}
where $M_{H}$ denotes the unaltered Hawking mass, and $|\cS_{\tau}|_{\overline{g}}$ grows exponentially in $\tau$, we have that
\begin{gather}
\lim_{\tau \rar \infty} M_{CH}(\cS_\tau) = \overline{E}_{ADM} = E_{ADM}.
\nonumber
\end{gather}
On the other hand, since (by \eqref{10}) $\mathcal{S}_{0}$ is a minimal surface and $|\mathcal{S}_{0}|_{\overline{g}}=\mathcal{A}$,
\begin{gather}
M_{CH}(\cS_0) = \sqrt{ \frac{\mathcal{A} }{16\pi} }
\left(1 + \frac{4\pi e^2}{ \mathcal{A} } \right) .
\nonumber
\end{gather}
Therefore integrating (\ref{inequality_with_div}) yields
\begin{gather}
E_{ADM} -
\sqrt{ \frac{\cA }{16\pi} }
\left(1 + \frac{4\pi e^2}{ \cA  } \right)
\geq
- \frac{2}{(16\pi)^\frac{3}{2}} \int_{\Si} \frac{  \sqrt{|\cS_\tau|_{\overline{g}}} \, \overline{H} }{\phi} \overline{\dd}(\phi q) ,
\nonumber
\end{gather}
after applying the co-area formula. According to the definition \eqref{definition_phi} of $\phi$
\begin{gather}
\frac{1}{\sqrt{16\pi}} \int_{\Si} \frac{ \sqrt{|\cS_\tau|_{\overline{g}}} \,\overline{H} }{\phi} \overline{\dd}(\phi q)
=\int_{\Si}  \overline{\dd}(\phi q)
=\int_{\cS_\infty} \phi \langle q, \nu_{\overline{g}} \rangle
-\int_{\cS_0} \phi \langle q, \nu_{\overline{g}} \rangle.
\nonumber
\end{gather}
Well-known behavior of solutions to the (generalized) Jang equation \cite{HK,SY3} shows that $q(x) \rar 0$ as $|x|\rightarrow\infty$, and that $q$ remains bounded on $\mathcal{S}_{0}$
even if the Jang surface blows-up over part or all of this surface.
Moreover $\phi \rar 1$ as $|x|\rightarrow\infty$ and $\phi = 0$ on $\mathcal{S}_{0}$, since
$\overline{H}_{\mathcal{S}_{0}} = 0$ by \eqref{10}. Hence both boundary integrals vanish, and this yields the inequality (\ref{Penrose_charged}).

\begin{rema}
In this section it was assumed that the IMCF is smooth inside the Jang surface $\Sigma$. However it is only expected that a weak
solution of the Jang-IMCF system of equations will exist in general. If such a solution produces a weak IMCF in the sense of
Huisken and Ilmanen \cite{HI}, then it is easily verified that the arguments presented here will carry over to this setting.
In particular, $M_{CH}$ is monotonic across jumps in the weak IMCF, so that inequality (\ref{Penrose_charged}) is established
rigorously for time-symmetric initial data when $k=0$, as it is not necessary to solve the generalized Jang equation in this case.
\end{rema}

\section{Case of Equality\label{case_equality}}

In this section we will show that if equality holds in (\ref{Penrose_charged}), then the given initial data $(\mathcal{M},g,k,E)$ arise
from the exterior region of the Reissner-Nordstrom spacetime having metric
\begin{equation*}\label{15.0}
-\phi_{RN}^{2}dt^{2}+g_{RN}\text{ }\text{ }\text{ for }\text{ }\text{ }r\geq M+\sqrt{M^{2}-e^{2}},\text{ }\text{ }\text{ }\text{ }M\geq e,
\end{equation*}
where
\begin{equation*}\label{15.1}
\phi_{RN}=\sqrt{1-\frac{2M}{r}+\frac{e^2}{r^{2}}},\text{ }\text{ }\text{ }\text{ }g_{RN}=\left(1-\frac{2M}{r}+\frac{e^2}{r^{2}}\right)^{-1}dr^{2}+r^{2}d\sigma^{2}.
\end{equation*}
As mentioned in the introduction, it seems that this result does not appear in the literature,
and furthermore it does not follow from Jang's original arguments \cite{Ja} even in the time-symmetric case.

Suppose that equality holds in \eqref{Penrose_charged}, then all inequalities appearing in Section \ref{proof} must be equalities and
the following quantities must vanish
\begin{equation}\label{15}
|\nabla_{\mathcal{S}_{\tau}}\overline{H}|=|\overline{A}|-\frac{1}{2}\overline{H}^{2}=\mu-J(w)=|h-k|_{\overline{g}}=|q|_{\overline{g}}\equiv 0.
\end{equation}
In fact
\begin{equation}\label{17}
\mu=|J|_{g}\equiv 0,
\end{equation}
as can be seen from the identity
\begin{equation*}
\mu-J(w)=(\mu-|J|_{g})+|J|_{g}(1-|w|_{g})+(|J|_{g}|w|_{g}-J(w)),
\end{equation*}
combined with the dominant energy condition \eqref{dec} and the inequality $|w|_{g}<1$, which is valid wherever the solution $f$ to the generalized
Jang equation does not blow-up, that is, away from $\partial\mathcal{M}$. Moreover \eqref{17} and \eqref{15} imply that on each surface $\mathcal{S}_{\tau}$
\begin{equation}\label{18}
\overline{H}=const,\text{ }\text{ }\text{ }\text{ }|\overline{A}|=const,\text{ }\text{ }\text{ }\text{ }\lambda_{1}=\lambda_{2}=const.
\end{equation}

\subsection{Time-Symmetric Case\label{k_zero}}

In this subsection we restrict attention to the case when $k=0$. Here the solution of the generalized Jang equation is $f=0$, and
the IMCF, which is guaranteed to be smooth \cite{HI}, starts at the outermost minimal surface boundary $\mathcal{S}_{0}=\partial\mathcal{M}$.
We will show that the initial data $(\mathcal{M},g,E)$ are equivalent to that of the exterior region of the $t=0$ slice of the Reissner-Nordstrom spacetime.

When $k=0$, \eqref{17} implies that
\begin{gather}
R = 2|E|_g^2.
\nonumber
\end{gather}
Furthermore equality in \eqref{int_E_square_inequality} yields
\begin{equation}\label{19}
\langle E,\nu_{g}\rangle=const\text{ }\text{ }\text{ and }\text{ }\text{ }E=a(\tau)\nu_{g}\text{ }\text{ }\text{ on }\text{ }\text{ }\mathcal{S}_{\tau},
\end{equation}
for some smooth function $a(\tau)$. It follows that the scalar curvature $R$ is constant on $\mathcal{S}_{\tau}$.
Next observe that under IMCF $\partial_\tau = H^{-1} \nu_{g}$, so that taking a trace of the Riccati equation produces
\begin{gather}
 \partial_\tau H = - \Delta_{\cS_\tau} H^{-1} - (Ric(\nu,\nu) + |A|^2 ) H^{-1}
\nonumber
\end{gather}
where $Ric(\nu,\nu)$ is Ricci curvature in the direction $\nu_{g}$.
It then follows from \eqref{18} that $Ric(\nu,\nu)=const$ on $\mathcal{S}_{\tau}$.
Thus in light of two traces of the Gauss equation
\begin{gather}
Ric(\nu,\nu) - \frac{1}{2} R = - K + \frac{1}{2}H^2 - \frac{1}{2} |A|^2,
\nonumber
\end{gather}
the Gauss curvature satisfies $K=const$ on $\mathcal{S}_{\tau}$. It follows that $\cS_\tau$ is isometric to a round sphere,
having induced metric $r^{2}(\tau)d\sigma^{2}$ for some smooth function $r(\tau)$.
By the Gauss lemma
\begin{gather}
g = H^{-2} d\tau^2 + g|_{\cS_\tau} = \frac{4H^{-2}}{r^2} dr^2 + r^2 d\si^2,
\label{g_Gauss}
\end{gather}
where we have also used the fact that $d\tau = 2r^{-1}dr$, which follows from $4 \pi r^2(\tau) = |\cS_\tau|_g = |\cS_0|_g e^\tau$.
Since $\frac{d}{d\tau}M_{CH}\geq 0$, we must have $\frac{d}{d\tau}M_{CH}=0$ so that $M_{CH}(\tau)=M$ is constant for all $\tau$.
We may then solve for $H$ in terms of $M$ to find
\begin{gather}
H^2 = \frac{4}{r^2} \left( 1 - \frac{2M}{r} + \frac{e^2}{r^2} \right).
\nonumber
\end{gather}
Combining this with (\ref{g_Gauss}) shows that $g$ agrees with the metric $g_{RN}$ on the $t=0$ slice
of the Reissner-Nordstrom spacetime.

In order to complete the proof, we must show that the given electric field $E$ agrees
with the electric field arising from a point charge at the origin with respect to the metric $g_{RN}$:
\begin{equation*}\label{E_correct_form}
E_{RN}= \frac{e}{r^2} \sqrt{1 - \frac{2M}{r} + \frac{e^2}{r^2}} \, \partial_r.
\end{equation*}
To see that this is in fact the case, apply \eqref{19} to find
\begin{gather}
4\pi e = \int_{S_r} \langle E,\, \nu_{g} \rangle = a(r) \int_{S_r} r^2 \, d\si = 4\pi r^2 a(r).
\nonumber
\end{gather}
It follows that $E = \frac{e}{r^2} \nu_{g}$. Since
\begin{equation*}
\nu_{g} = \sqrt{1 - \frac{2M}{r} + \frac{e^2}{r^2}} \, \partial_r,
\end{equation*}
the desired result follows.

\subsection{Non-Time-Symmetric Case\label{case_k_not_zero}}
Suppose now that $k$ does not vanish. We may apply the arguments from the previous section to the charged Jang initial data
$(\Sigma,\overline{g},\overline{E})$ to obtain
\begin{equation*}\label{20}
\overline{g}=g_{RN},\text{ }\text{ }\text{ }\text{ }\text{ }\overline{E}=E_{RN}.
\end{equation*}
Furthermore since $\overline{g}$ is spherically symmetric we can use
(\ref{mean_curvature_rotational}) to calculate
\begin{gather}
\overline{H} = \frac{2}{r} \sqrt{ 1- \frac{2M}{r} + \frac{e^2}{r^2} },
\nonumber
\end{gather}
from which it follows that
\begin{align}
\phi = \sqrt{\frac{|\cS_\tau|_{\overline{g}}}{16\pi}} \, \overline{H}
= \frac{1}{2}r \, \overline{H}
=\sqrt{ 1- \frac{2M}{r} + \frac{e^2}{r^2} },
\nonumber
\end{align}
and hence $\phi=\phi_{RN}$. Since $g=\overline{g}-\phi^{2}df^{2}$ we find that the map $I:x\mapsto(x,f(x))$ yields an isometric embedding
of $(\mathcal{M},g)$ into the Reissner-Nordstrom spacetime. Notice that this implies that $S_{0}$ (defined in Section \ref{gen_Jang_surface_section})
is an apparent horizon, however by assumption the only apparent horizon in $\mathcal{M}$ is the boundary, and so $S_{0}=\partial\mathcal{M}$. That is,
the boundary of $\mathcal{M}$ is outerminimizing in the case of equality. Next recall that since $|h-k|_{\overline{g}}=0$, a calculation \cite{BK1,BK2} shows
that the second fundamental form of the embedding $I$ agrees with the initial data $k$.

It remains to show that $E$ agrees with the induced electric field on the surface $I(\mathcal{M})$ sitting inside the Reissner-Nordstrom
spacetime. Consider the field strength tensor for the electromagnetic field in the Reissner-Nordstrom spacetime
\begin{gather}
F_{RN} =  \left(
 \begin{array}{cccc}
  0 & \phi_{RN}(E_{RN})_1 & \phi_{RN}(E_{RN})_2 & \phi_{RN}(E_{RN})_3 \\
- \phi_{RN}(E_{RN})_1 & 0 & 0 & 0 \\
- \phi_{RN}(E_{RN})_2 & 0 & 0 & 0 \\
- \phi_{RN}(E_{RN})_3 & 0 & 0 & 0
 \end{array}
\right)
=\left(
 \begin{array}{cccc}
  0 & \frac{e}{r^{2}} & 0 & 0 \\
- \frac{e}{r^{2}} & 0 & 0 & 0 \\
0 & 0 & 0 & 0 \\
0 & 0 & 0 & 0
 \end{array}
\right).
\nonumber
\end{gather}
The induced electric field on the surface $I(\mathcal{M})$ is given by
\begin{gather}
\widehat{E}_i = F_{RN}(\widehat{N}, X_i),
\nonumber
\end{gather}
where
\begin{equation*}
\widehat{N}=\frac{\phi_{RN}^{-1}\partial_{t}+\phi_{RN}g_{RN}^{il}f_{l}\partial_{i}}{\sqrt{1-\phi_{RN}^{2}|\nabla f|^{2}_{g_{RN}}}}
\end{equation*}
is the unit normal to $I(\mathcal{M})$, and $X_i = \partial_i + f_i \partial_t$, $i=1,2,3$ are tangent vectors.
It is shown in Appendix \ref{Lorentzian_case} that
\begin{gather}
E_{i}=\widehat{E}_i,
\label{hat_E_i_equal_E_i}
\end{gather}
finishing the proof.

\appendix\section*{Appendix: Properties of $E$ and $\overline{E}$}

The induced electric field on a spacelike slice $\Sigma$ of a spacetime $(\cN,\eta)$ is given by $F(N, \cdot)$, where
$F$ is the field strength of the electromagnetic field and $N$ is the unit normal to $\Sigma$.
Although the physical situation occurs when $\eta$ has Lorentzian signature,
the same procedure can be applied in the Riemannian setting and is utilized in this paper.
The point of interest here is of course when $\Sigma$ is the Jang surface. Both cases of an ambient spacetime
having Riemannian and Lorentzian signature are analyzed; the former is treated in Appendix \ref{Riemannian_case},
while the later is treated in Appendix \ref{Lorentzian_case}. In particular we prove \eqref{8}, \eqref{definition_E_bar},
and (\ref{hat_E_i_equal_E_i}).

\section{Riemannian case\label{Riemannian_case}}

Throughout this section $\Si = \{t=f(x)\}$ will be a graph inside
the warped product 4-manifold $(\cM \times \RR, \, g + \phi^2 dt^2)$. Although we will use the same notation as in
the body of the paper, the function $f$ will not be required to satisfy the generalized Jang equation (\ref{generalized_Jang_equation}).
More precisely, the validity of the results presented here is independent of any equation that $f$ satisfies.

Let $F = \frac{1}{2} F_{ab}dx^a\wedge dx^b$ be the field strength given by
$F_{0i} = \phi E_i$ and $F_{ij} = 0$ for $i,j=1,2,3$, with $x^i$ coordinates on $\cM$. In matrix form
\begin{gather}
F = \left(
 \begin{array}{cccc}
  0 & \phi E_1 & \phi E_2 & \phi E_3 \\
- \phi E_1 & 0 & 0 & 0 \\
- \phi E_2 & 0 & 0 & 0 \\
- \phi E_3 & 0 & 0 & 0
 \end{array}
\right).
\nonumber
\end{gather}
In order to check that this is the correct expression for $F$ in $(\cM \times \RR, \, g + \phi^2 dt^2)$, notice that
the unit normal to the $t=0$ slice is $\phi^{-1} \partial_t$, so that the electric field induced
on $\cM$ by $F(\phi^{-1} \partial_t, \cdot )$ is exactly $E$. Next observe that each tangent space of $ \Si$ is spanned
by $X_i = \partial_i + f_i \partial_t$, $i=1,2,3$, and its unit normal is given by
\begin{gather}
N = \frac{\phi^{-1}\partial_{t} - \phi f^i \partial_{i}}{\sqrt{1 +\phi^{2}|\nabla f|_g^{2}}} .
\nonumber
\end{gather}
Therefore the electric field induced on $\Sigma$ becomes
\begin{equation}
\overline{E}_i=F(N,X_i) =F\left(\frac{\phi^{-1}\partial_{t} - \phi f^l \partial_{l}}{\sqrt{1 +\phi^{2}|\nabla f|_g^{2}}}, \,
\partial_{i}+ f_i \partial_{t}\right)
=\frac{E_{i}+\phi^{2}f_i f^l E_{l}}{ \sqrt{1+\phi^{2}|\nabla f|_g^{2}} },
\label{E_bar_Riemmanian}
\end{equation}
which establishes (\ref{definition_E_bar}).

\begin{lemma}
\begin{equation*}\label{30}
|E|_{g}\geq|\overline{E}|_{\overline{g}}
\end{equation*}
\end{lemma}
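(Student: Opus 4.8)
The plan is to compute both sides directly using the explicit formula \eqref{E_bar_Riemmanian} for $\overline{E}_i$ and the explicit form of the Jang metric $\overline{g} = g + \phi^2 df^2$. The quantity $|E|_g^2 = g^{ij}E_i E_j$ is straightforward. For $|\overline{E}|_{\overline{g}}^2$ I would first need the inverse metric $\overline{g}^{ij}$; a standard Sherman–Morrison computation gives
\begin{equation*}
\overline{g}^{ij} = g^{ij} - \frac{\phi^2 f^i f^j}{1 + \phi^2|\nabla f|_g^2},
\end{equation*}
which is exactly the operator already appearing in \eqref{generalized_Jang_equation}. The first step is therefore to substitute \eqref{E_bar_Riemmanian} and this inverse into $|\overline{E}|_{\overline{g}}^2 = \overline{g}^{ij}\overline{E}_i\overline{E}_j$ and simplify.

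The second step is the algebraic simplification. Writing $a = \phi^2|\nabla f|_g^2 \geq 0$ so that $\sq = \sqrt{1+a}$, and setting $b = f^j E_j$ (so that $|b|$ is controlled by Cauchy–Schwarz via $b^2 \leq |\nabla f|_g^2 |E|_g^2$), I expect the numerator to collapse nicely: the $f^i f^j$ terms from $\overline{g}^{ij}$ interact with the $\phi^2 f_i f^l E_l$ terms in $\overline{E}_i$ to produce denominators $1+a$, and after clearing these one should arrive at something like
\begin{equation*}
|\overline{E}|_{\overline{g}}^2 = \frac{1}{1+a}\left( |E|_g^2 + \text{(terms involving } b^2, a\text{)} \right) = |E|_g^2 - \frac{\phi^2\big(|\nabla f|_g^2 |E|_g^2 - b^2\big)}{1+a},
\end{equation*}
or some equivalent arrangement. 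The key point is that whatever correction term appears, it should manifestly have a sign: the combination $|\nabla f|_g^2|E|_g^2 - (f^jE_j)^2$ is nonnegative by Cauchy–Schwarz, and it is subtracted (divided by the positive quantity $1+a$). Hence $|\overline{E}|_{\overline{g}}^2 \leq |E|_g^2$, which after taking square roots gives the claim.

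I expect the main obstacle to be purely bookkeeping: keeping track of which indices are raised with $g$ versus contracted against $\overline{g}^{ij}$, and correctly expanding the product $\overline{g}^{ij}\overline{E}_i\overline{E}_j$, which has four groups of terms before cancellation. There is no conceptual difficulty—the inequality is ultimately just Cauchy–Schwarz dressed up through the graph geometry—but the cancellation of denominators must be done carefully so that the residual term is exhibited in a manifestly signed form. An alternative, slightly cleaner route avoiding the inverse metric: observe that $\overline{E}$ is the restriction of $F(N,\cdot)$ to $T\Sigma$ while $E = F(\phi^{-1}\partial_t,\cdot)$ on $\cM$, and relate $|\overline{E}|_{\overline{g}}$ to $|E|_g$ by decomposing the relevant vectors into components tangent and normal to $\Sigma$; since restricting a covector to a subspace and measuring with the induced metric can only decrease its norm, the inequality follows more geometrically. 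Either way, the final statement is immediate once the computation is carried out.
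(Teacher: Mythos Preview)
Your proposal is correct and follows essentially the same route as the paper: compute $|\overline{E}|_{\overline{g}}^2$ directly from \eqref{E_bar_Riemmanian} and the inverse metric $\overline{g}^{ij}=g^{ij}-\phi^{2}f^{i}f^{j}/(1+\phi^{2}|\nabla f|_{g}^{2})$, simplify, and finish with Cauchy--Schwarz on $(f^{j}E_{j})^{2}\leq |\nabla f|_{g}^{2}|E|_{g}^{2}$. The paper's algebra lands on $|\overline{E}|_{\overline{g}}^{2}=\bigl(|E|_{g}^{2}+\phi^{2}(f^{i}E_{i})^{2}\bigr)/(1+\phi^{2}|\nabla f|_{g}^{2})$ and then closes with a short bootstrap inequality, whereas your rearrangement $|E|_{g}^{2}-|\overline{E}|_{\overline{g}}^{2}=\phi^{2}\bigl(|\nabla f|_{g}^{2}|E|_{g}^{2}-(f^{j}E_{j})^{2}\bigr)/(1+a)$ is equivalent and in fact slightly more direct.
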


\begin{proof}
By direct calculation
\begin{eqnarray*}
|\overline{E}|^{2}_{\overline{g}}&=&\overline{g}^{ij}F(N,X_{i})F(N,X_{j})\\
&=&\overline{g}^{ij}(1+\phi^{2}|\nabla f|_g^{2})^{-1}(E_{i}+\phi^{2}f_i f^l E_{l})(E_{j}+\phi^{2}f_j f^k E_{k})\\
&=&\frac{\overline{g}^{ij}E_{i}E_{j}}{1+\phi^{2}|\nabla f|_g^{2}}
+2\frac{\phi^{2}\overline{g}^{ij} f_i E_j f^k E_{k}}{(1+\phi^{2}|\nabla f|_g^{2})^{2}}
+\frac{\phi^{4}\overline{g}^{ij}  f_i f_j(f^k E_{k})^{2}}{(1+\phi^{2}|\nabla f|_g^{2})^{2}}.
\end{eqnarray*}
Observe that since
\begin{equation}
\overline{g}^{ij}=g^{ij}-\frac{\phi^{2}f^i f^j}{1+\phi^{2}|\nabla f|_g^{2}},
\label{induced_g_inverse}
\end{equation}
we have
\begin{eqnarray*}
& &\frac{\overline{g}^{ij}E_{i}E_{j}}{1+\phi^{2}|\nabla f|_g^{2}}+
2\frac{\phi^{2}\overline{g}^{ij}  f_i E_j f^k E_{k}}{(1+\phi^{2}|\nabla f|_g^{2})^{2}}
+\frac{\phi^{4}\overline{g}^{ij} f_i  f_j(f^k E_{k})^{2}}{(1+\phi^{2}|\nabla f|_g^{2})^{2}} \\
&=&\frac{|E|_g^{2}}{1+\phi^{2}|\nabla f|_g^{2}}+\frac{\phi^{2}(f^i E_{i})^{2}}{1+\phi^{2}|\nabla f|_g^{2}}.
\end{eqnarray*}
Hence
\begin{eqnarray*}
|E|_g^{2}-|\overline{E}|_{\overline{g}}^{2}&=&\phi^{2}(|\nabla f|_g^{2} \, |\overline{E}|_{\overline{g}}^{2}-( f^i E_{i})^{2})\\
&\geq&\phi^{2}(|\nabla f|_g^{2}\, |\overline{E}|_{\overline{g}}^{2}-|\nabla f|_g^{2} \, |E|_g^{2})\\
&=&-\phi^{2}|\nabla f|_g^{2}(|E|_g^{2}-|\overline{E}|_{\overline{g}}^{2}),
\end{eqnarray*}
from which the desired inequality follows.
\end{proof}

\begin{lemma}
\begin{equation}\label{31}
\dd E=\sqrt{1+\phi^{2}|\nabla f|_g^{2}}~\overline{\mathrm{div}}\,\overline{E}
\end{equation}
\end{lemma}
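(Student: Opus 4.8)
The plan is to reduce both sides to the standard coordinate formula for the divergence of a vector field and then compare. Using the graph parametrization $x\mapsto(x,f(x))$, the coordinates $x^i$ on $\cM$ serve simultaneously as coordinates on $\Sigma$, with $\partial_i$ on $\cM$ corresponding to the canonical tangent vectors $X_i=\partial_i+f_i\partial_t$ on $\Sigma$. In these coordinates $\dd E=\tfrac{1}{\sqrt{\det g}}\partial_i\big(\sqrt{\det g}\,E^i\big)$ with $E^i=g^{ij}E_j$, and $\overline{\dd}\,\overline{E}=\tfrac{1}{\sqrt{\det\overline{g}}}\partial_i\big(\sqrt{\det\overline{g}}\,\overline{g}^{ij}\overline{E}_j\big)$, so it suffices to prove the pointwise identity $\sqrt{\det\overline{g}}\;\overline{g}^{ij}\overline{E}_j=\sqrt{\det g}\;E^i$ as functions of $x$. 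Granting this, $\partial_i$ of the two sides agree, and the two divergence operators differ precisely by the factor $\sqrt{\det\overline{g}}/\sqrt{\det g}$, which will turn out to be $\sq$.

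First I would compute the determinants. Since $\overline{g}_{ij}=g_{ij}+\phi^2 f_i f_j$ is a rank-one perturbation of $g$, the matrix determinant lemma gives $\det\overline{g}=(\det g)\big(1+\phi^2 g^{ij}f_i f_j\big)=(\det g)(\sqsq)$, hence $\sqrt{\det\overline{g}}=\sq\,\sqrt{\det g}$. Next I would raise the index on $\overline{E}$, using the formula $\eqref{induced_g_inverse}$ for $\overline{g}^{ij}$ and the definition $\eqref{definition_E_bar}$ of $\overline{E}_j$. A direct computation shows the cross terms collapse: the $g^{ij}$ part of $\overline{g}^{ij}$ contributes $\big(E^i+\phi^2 f^i f^kE_k\big)/\sq$, while the $-\phi^2 f^i f^j/(\sqsq)$ part contributes $-\phi^2 f^i f^kE_k/\sq$ after pulling a factor of $\sqsq$ out of the resulting bracket, so the two terms $\pm\phi^2 f^i f^kE_k/\sq$ cancel and one is left with $\overline{g}^{ij}\overline{E}_j=E^i/\sq$. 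Multiplying by $\sqrt{\det\overline{g}}=\sq\,\sqrt{\det g}$ yields $\sqrt{\det\overline{g}}\,\overline{g}^{ij}\overline{E}_j=\sqrt{\det g}\,E^i$, which is exactly the identity needed above.

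The argument is purely algebraic and local and, consistent with the standing convention of this appendix, does not use the generalized Jang equation; I do not expect any genuine obstacle. The only point requiring mild care is the bookkeeping in the index-raising step, where one must insert the factor $\sqsq$ at the right place to witness the cancellation — the same simplification already used in deriving $\eqref{induced_g_inverse}$ and in the proof of the preceding lemma. As a byproduct, the identity $\sqrt{\det\overline{g}}\,\overline{g}^{ij}\overline{E}_j=\sqrt{\det g}\,E^i$ shows that $\overline{E}$ and $E$ induce the same flux form, so the flux of $\overline{E}$ in $\overline{g}$ through any surface equals that of $E$ in $g$ through the same surface; in particular $\overline{e}=e$. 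Combined with $\eqref{31}$ (which gives $\overline{\dd}\,\overline{E}=0$ whenever $\dd E=0$) and the preceding lemma (which gives $|E|_g\geq|\overline{E}|_{\overline{g}}$), this establishes all three properties in $\eqref{8}$.
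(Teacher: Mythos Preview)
Your argument is correct and is genuinely different from both proofs the paper gives. The paper's first proof works in the ambient warped product $(\cM\times\RR,g+\phi^{2}dt^{2})$, computes the $4$-current $\mathcal{J}_{b}=\widehat{\nabla}^{a}F_{ab}$, shows its spatial components vanish, and then relates the two normals $\phi^{-1}\partial_{t}$ and $N$ to identify $-\mathcal{J}(\phi^{-1}\partial_{t})=\dd E$ with $-\sqrt{1+\phi^{2}|\nabla f|_{g}^{2}}\,\mathcal{J}(N)=\sqrt{1+\phi^{2}|\nabla f|_{g}^{2}}\,\overline{\dd}\,\overline{E}$. The paper's second proof is a direct expansion of $\overline{\nabla}_{j}\overline{E}_{i}$ via the Christoffel-symbol relation between $\overline{g}$ and $g$, a lengthy calculation in which many terms cancel pairwise. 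Your route bypasses both: the matrix determinant lemma plus the index-raising identity $\overline{g}^{ij}\overline{E}_{j}=E^{i}/\sq$ yield the single pointwise equality $\sqrt{\det\overline{g}}\,\overline{E}^{\,i}=\sqrt{\det g}\,E^{i}$, from which the coordinate divergence formula gives the result in one line. Your approach is the most economical of the three and has the pleasant side effect you note---equality of flux forms---which simultaneously recovers $\overline{e}=e$ (the content of the paper's subsequent lemma) without a separate argument. What the paper's first proof buys in exchange is a conceptual explanation: the identity is a manifestation of the coordinate invariance of the $4$-current, and that viewpoint may generalize more readily (e.g., to nonvanishing magnetic field or to slices that are not graphs).
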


\begin{proof}
We will present two different proofs of this equality, one
emphasizing conceptual aspects and another one based on direct computation,
as we believe that both approaches may be useful in
further generalizations of Theorem \ref{Penrose_inequality_theo}.

\textit{First proof.} Consider the 4-current $\mathcal{J}_{b}=\widehat{\nabla}^{a}F_{ab}$, where $\widehat{\nabla}$ is the Levi-Civita connection associated with
the 4-metric $\widehat{g}=g+\phi^{2}dt^{2}$. The Christoffel symbols of this metric are given by
\begin{equation*}\label{32}
\widehat{\Gamma}_{00}^{0}=\widehat{\Gamma}_{ij}^{0}=\widehat{\Gamma}_{i0}^{j}=0,\text{ }\text{ }\text{ }\text{ }1\leq i,j\leq 3,
\end{equation*}
\begin{equation*}\label{33}
\widehat{\Gamma}_{i0}^{0}=\partial_{i}(\log\phi),\text{ }\text{ }\text{ }\text{ }\widehat{\Gamma}_{00}^{i}=-\phi\phi^{i},
\end{equation*}
and $\widehat{\Gamma}_{ij}^{l}$ agree with the Christoffel symbols $\Gamma_{ij}^{l}$ of $g$ when $1\leq i,j,l\leq 3$.
In what follows, indices $a,b,c$ will run from 0 to 3, while indices $i,j,l$ will run from 1 to 3. We then have
\begin{equation}\label{34}
\mathcal{J}_i = \mathcal{J}(\partial_{i}) = \widehat{\nabla}^{a} F_{ai} = 0.
\end{equation}
To see this observe that
\begin{align}\label{35}
\begin{split}
\widehat{\nabla}^0 F_{0i} &= \widehat{g}^{0b}\widehat{\nabla}_{b}F_{0i}\\
&=\widehat{g}^{0b}(\partial_b F_{0i} - \widehat{\Ga}_{b0}^c F_{ci}-\widehat{\Gamma}_{bi}^{c}F_{0c} )\\
&=-\phi^{-2}(\widehat{\Gamma}_{00}^{c}F_{ci}+\Gamma_{0i}^{j}F_{0j})=0,
\nonumber
\end{split}
\end{align}
and
\begin{align}
\begin{split}
\widehat{\nabla}^j F_{ji} &= \widehat{g}^{jb}\widehat{\nabla}_{b}F_{ji}\\
&=g^{jl}(\partial_l F_{ji} - \widehat{\Ga}_{lj}^{c} F_{ci}-\widehat{\Gamma}_{li}^{c}F_{jc} )\\
&=-g^{jl}(\widehat{\Gamma}_{lj}^{0}F_{0i}+\Gamma_{li}^{0}F_{j0})=0,
\nonumber
\end{split}
\end{align}
where we have used the fact that $F_{ab}$ does not depend on $t$ and $F_{ij}=0$ for $1\leq i,j\leq 3$.
Since $\mathrm{div}E$ represents the charge density on $\mathcal{M}$,
\begin{equation*}
\mathrm{div}E=-\mathcal{J}(\phi^{-1}\partial_{t}).
\end{equation*}
Now rewrite the unit normal to $\mathcal{M}$ in terms of the unit normal to $\Sigma$ by
\begin{equation*}
\phi^{-1}\partial_{t}=\sqrt{1+\phi^{2}|\nabla f|_g^{2}}\,N +\phi f^{i}\partial_{i}.
\end{equation*}
In light of \eqref{34}, it follows that
\begin{equation*}
\mathrm{div} E= -\mathcal{J}(\phi^{-1}\partial_{t}) =
-\sqrt{1+\phi^{2}|\nabla f|_g^{2}}\,\mathcal{J}(N) -\mathcal{J}(\phi f^{i}\partial_{i})
= -\sqrt{1+\phi^{2}|\nabla f|_g^{2}}\,\mathcal{J}(N).
\end{equation*}
On the other hand $\overline{\mathrm{div}}\,\overline{E}$ represents the charge density on $\Sigma$, so that
\begin{equation}\label{36}
\overline{\mathrm{div}}\,\overline{E}=-\mathcal{J}(N),
\end{equation}
and equality \eqref{31} follows.

Note that \eqref{36} can also be verified explicitly. Take Fermi coordinates on $\Sigma$, and observe that
\begin{equation*}
\mathcal{J}(N) = N^b \widehat{\nabla}^{a}F_{ab} = - \widehat{\nabla}^a N^b F_{ab} + \widehat{\nabla}^a (F_{aN}).
\end{equation*}
Since $\widehat{\nabla}^a N^b$ is symmetric and $F_{ab}$ is anti-symmetric in $a$ and $b$, the first term on the right-hand
side vanishes. Consider now the remaining term on the right-hand side. Since $\widehat{\Ga}^a_{NN} = 0$ it follows that
\begin{equation*}
\widehat{\nabla}^{N}(F_{NN})=\widehat{g}^{Na}\widehat{\nabla}_{a}(F_{NN})=\widehat{g}^{Na}(N(F_{NN})-\widehat{\Gamma}_{aN}^{c}F_{cN})=0.
\end{equation*}
Also
\begin{equation*}
\widehat{\nabla}^{j}(F_{jN})=\widehat{g}^{jl}\widehat{\nabla}_{l}(F_{jN})=\widehat{g}^{jl}(\partial_{l}F_{jN}-\widehat{\Gamma}_{lj}^{i}F_{iN})
=-\overline{g}^{jl}(\partial_{l}\overline{E}_{j}-\overline{\Gamma}_{lj}^{i}\overline{E}_{i})=-\overline{\mathrm{div}}\,\overline{E},
\end{equation*}
where $\overline{\Gamma}_{lj}^{i}$ are Christoffel symbols for the metric $\overline{g}$ induced on $\Sigma$.

\textit{Second proof.} We now verify \eqref{31} by direct computation. According to \cite{BK1, BK2}, the relationship between the Christoffel symbols
of $\overline{g}$ and $g$ is given by
\begin{equation*}
\overline{\Ga}^m_{ij} = \Ga_{ij}^{m} - \phi \phi^m f_i f_j + \frac{\phi p_{ij} f^m}{\sqrt{1+\phi^2|\nabla f|_g^2 }},
\end{equation*}
where
\begin{equation}
p_{ij} = \frac{ \phi \nabla_{ij}f + \phi_i f_j + \phi_j f_i + \phi^2 \phi^l f_l f_i f_j}{\sqrt{1 + \phi^2 |\nabla f|_g^2} }
\label{sff_graph}
\end{equation}
is the second fundamental form of $\Si$ inside $(\cM\times \RR, g + \phi^2 dt^2)$. Let $\overline{\nabla}$ denote covariant differentiation with respect to $\overline{g}$, then
\begin{align}
 \begin{split}
  \overline{\nabla}_j \overline{E}_i  = &
\partial_j \left( \frac{E_i + \phi^2 f^l E_l f_i }{\sq} \right) -
\overline{\Ga}_{ij}^m \frac{E_m + \phi^2 f^l E_l f_m }{\sq} \\
 = & \frac{1}{\sq} \partial_j E_i + \partial_j \left(\frac{1}{\sq}\right) E_i +
\partial_j \left( \frac{\phi^2 f^l E_l f_i }{\sq} \right) \\
& - \Ga_{ij}^m \frac{E_m + \phi^2 f^l E_l f_m }{\sq}
+ \left( \phi \phi^m f_i f_j - \frac{f^m p_{ij} \phi }{\sq} \right )\left(  \frac{E_m + \phi^2 f^l E_l f_m }{\sq} \right) .
\end{split}
\nonumber
\end{align}
The terms involving $\partial_j E_i$ and $\Ga_{ij}^m E_m$, as well as
$\partial_j \left( \frac{\phi^2 f^l E_l f_i }{\sq} \right)$ and
$\Ga_{ij}^m \left( \frac{\phi^2 f^l E_l f_m }{\sq} \right)$ combine to form covariant
derivatives, so that
\begin{align}
 \begin{split}
  \overline{\nabla}_j \overline{E}_i  = &
\frac{1}{\sq} \nabla_j E_i + \nabla_j \left( \frac{\phi^2 f^l E_l f_i }{\sq}\right)
+ \partial_j \left(\frac{1}{\sq}\right) E_i \\
& + \left( \phi \phi^m f_i f_j - \frac{f^m p_{ij} \phi }{\sq} \right )\left(  \frac{E_m + \phi^2 f^l E_l f_m }{\sq} \right) .
\end{split}
\nonumber
\end{align}
Set $p=g^{ij}p_{ij}$, take a trace with $\overline{g}^{ij}$, and use (\ref{induced_g_inverse}) to find
\begin{align}
\begin{split}
 \overline{\dd} \, \overline{E} & =
\frac{1}{\sq} \nabla^i E_i +
\nabla^i \left( \frac{\phi^2 f^l E_l f_i }{ \sq} \right)
 + g^{ij} \partial_j \left(\frac{1}{\sq}\right) E_i \\
& + \left( \phi |\nabla f|_g^2 \phi^m  - \frac{ p \phi f^m }{\sq} \right )\left(  \frac{E_m + \phi^2 f^l E_l f_m }{\sq} \right)
- \frac{\phi^2 f^i f^j}{(\sqsq)^\frac{3}{2} } \nabla_j E_i \\
& - \frac{\phi^2 f^i f^j }{\sqsq} \nabla_j \left(\frac{\phi^2 f^l E_l f_i }{\sq} \right )
- \frac{\phi^2 f^i f^j }{\sqsq} \partial_j \left( \frac{1}{\sq} \right) E_i \\
& - \frac{\phi^2 f^i f^j }{\sqsq} \left( \phi f_i f_j \phi^m - \frac{\phi p_{ij} f^m }{\sq} \right)
 \left(  \frac{E_m + \phi^2 f^l E_l f_m }{\sq} \right).
\end{split}
\label{big_identity_begin}
\end{align}

Next expand \eqref{big_identity_begin}
\begin{align}
\begin{split}
 \overline{\dd} \, \overline{E} & =
\frac{1}{\sq} \dd E +
\nabla^i \left( \frac{\phi^2 f^l E_l }{ \sq} \right) f_i +
\frac{\phi^2 f^l E_l }{\sq} \nabla^i f_i \\
& + g^{ij} \partial_j \left(\frac{1}{\sq}\right) E_i
 + \frac{\phi |\nabla f|_g^2 \phi^m E_m}{\sq}  + \frac{ \phi^3 |\nabla f|_g^2 f^l E_l \phi^m f_m }{\sq}
- \frac{p\phi  f^m E_m }{\sqsq}\\
&  - \frac{p\phi^3  f^l E_l |\nabla f|^2_g }{\sqsq} - \frac{\phi^2 f^i f^j \nabla_j E_i }{(\sqsq)^\frac{3}{2}}
- \frac{\phi^2 f^i f^j}{\sqsq} \nabla_j \left( \frac{\phi^2 f^l E_l }{\sq} \right) f_i \\
&- \frac{ \phi^4 f^i f^j f^l E_l}{(\sqsq)^\frac{3}{2} } \nabla_j f_i
 -\frac{\phi^2 f^i f^j}{\sqsq} \partial_j \left( \frac{1}{\sq} \right) E_i
- \frac{\phi^3 f^i f^j \phi^m E_m f_i f_j }{ (\sqsq)^\frac{3}{2}  } \\
& - \frac{\phi^5 f^i f^j f^l E_l \phi^m f_m f_i f_j }{(\sqsq)^\frac{3}{2} }
+ \frac{\phi^3 f^i f^j p_{ij} f^m E_m }{(\sqsq)^2}
+ \frac{\phi^5 f^i f^j p_{ij} f^m E_m |\nabla f|^2_g }{(\sqsq)^2}.
\end{split}
\label{big_identity}
\end{align}
With the help of (\ref{sff_graph}), the third, seventh, and eighth terms on the right-hand side combine to yield
\begin{align}
\begin{split}
&  \frac{\phi^2 f^l E_l }{\sq} \nabla^i f_i
 - \frac{p\phi f^m E_m }{\sqsq}
  - \frac{p\phi^3 f^l E_l |\nabla f|^2_g }{\sqsq}\\
& =  \frac{\phi^2 f^l E_l }{\sq} \nabla^i f_i
- p\phi  f^m E_m
\\
& =  \frac{\phi^2 f^l E_l }{\sq} \nabla^i f_i
- \phi  f^m E_m g^{ij}
\left( \frac{\phi \nabla_{ij}f + \phi_i f_j + \phi_j f_i + \phi^2 \phi^l f_l f_i f_j}{\sq}  \right)
\\
& =  \frac{\phi^2 f^l E_l }{\sq} \nabla^i f_i
- \frac{\phi  f^m E_m}{\sq}
\left(\phi \nabla^i f_i + 2\phi^i f_i +  \phi^2 \phi^l f_l |\nabla f|_{g}^{2} \right)
\\
& =  - \frac{ 2\phi f^l E_l \phi^m f_m}{\sq} -  \frac{ \phi^3 f^l E_l \phi^m f_m |\nabla f|^2_g}{\sq} .
\end{split}
\label{big_3_7_8}
\end{align}
Write the second term on the right hand side of (\ref{big_identity}) as
\begin{gather}
\nabla^i \left( \frac{\phi^2 f^l E_l }{ \sq} \right) f_i =
 \frac{2 \phi  f^l E_l \phi^i f_i }{ \sq}  +
\phi^2 \nabla^i \left( \frac{f^l E_l }{ \sq} \right) f_i  .
\label{big_2}
\end{gather}
Insert (\ref{big_3_7_8}) and (\ref{big_2}) into (\ref{big_identity}).
The first term on the right-hand side of (\ref{big_3_7_8}) cancels with the
first term on the right-hand side of (\ref{big_2}), and
the second term on the right-hand side of (\ref{big_3_7_8}) cancels with the sixth term on the right-hand side of (\ref{big_identity}).
Therefore
\begin{align}
\begin{split}
 \overline{\dd} \, \overline{E} & =
\frac{1}{\sq} \dd E +
\phi^2 \nabla^i \left( \frac{f^l E_l }{ \sq} \right) f_i
 + g^{ij} \partial_j \left(\frac{1}{\sq}\right) E_i \\
& + \frac{\phi |\nabla f|_g^2 \phi^m E_m}{\sq}
- \frac{\phi^2 f^i f^j \nabla_j E_i }{(\sqsq)^\frac{3}{2}}
- \frac{\phi^2 f^i f^j}{\sqsq} \nabla_j \left( \frac{\phi^2 f^l E_l }{\sq} \right) f_i \\
&- \frac{ \phi^4 f^i f^j f^l E_l}{(\sqsq)^\frac{3}{2} } \nabla_j f_i
 -\frac{\phi^2 f^i f^j}{\sqsq} \partial_j \left( \frac{1}{\sq} \right) E_i
- \frac{\phi^3 f^i f^j \phi^m E_m f_i f_j }{ (\sqsq)^\frac{3}{2}  } \\
& - \frac{\phi^5 f^i f^j f^l E_l \phi^m f_m f_i f_j }{(\sqsq)^\frac{3}{2} }
+ \frac{\phi^3 f^i f^j p_{ij} f^m E_m }{(\sqsq)^2}
+ \frac{\phi^5 f^i f^j p_{ij} f^m E_m |\nabla f|^2_g }{(\sqsq)^2} .
\end{split}
\label{big_identity_2}
\end{align}

Notice that the last two terms combine into
\begin{equation*}
\frac{\phi^3 f^i f^j p_{ij} f^m E_m }{(\sqsq)^2}
+ \frac{\phi^5 f^i f^j p_{ij} f^m E_m |\nabla f|^2_g }{(\sqsq)^2}
= \frac{\phi^3 f^i f^j p_{ij} f^m E_m}{\sqsq} .
\end{equation*}
Moreover the third and fourth terms on the right hand side of (\ref{big_identity_2}) become
\begin{equation*}
-\frac{1}{2} \frac{\phi^2 g^{ij}   E_i  \partial_j|\nabla f|^2_g}{(\sqsq)^\frac{3}{2}}
+ \frac{\phi^3 |\nabla f|_g^4 \phi^m E_m }{(\sqsq)^\frac{3}{2}} .
\end{equation*}
Upon using this expression in (\ref{big_identity_2}), we find that the last term cancels with the
ninth term on the right-hand side of (\ref{big_identity_2}). Hence
\begin{align}
\begin{split}
 \overline{\dd} \, \overline{E} & =
\frac{1}{\sq} \dd E
+ \phi^2 f_i \nabla^i \left( \frac{f^l E_l }{ \sq} \right)
-\frac{1}{2} \frac{\phi^2 g^{ij}   E_i  \partial_j|\nabla f|^2_g }{(\sqsq)^\frac{3}{2}}
\\
&
- \frac{\phi^2 f^i f^j \nabla_j E_i }{(\sqsq)^\frac{3}{2}}
- \frac{\phi^2 |\nabla f|_g^2}{\sqsq} f_i\nabla^i \left( \frac{\phi^2 f^l E_l }{\sq} \right)
- \frac{ \phi^4 f^i f^j f^l E_l}{(\sqsq)^\frac{3}{2} } \nabla_j f_i \\
& -\frac{\phi^2 f^i f^j}{\sqsq} \partial_j \left( \frac{1}{\sq} \right) E_i
  - \frac{\phi^5 |\nabla f|_g^4 f^l E_l \phi^m f_m }{(\sqsq)^\frac{3}{2} }
+ \frac{\phi^3 f^i f^j p_{ij} f^m E_m}{\sqsq} .
\end{split}
\label{big_identity_3}
\end{align}
Now use (\ref{sff_graph}) to compute the last term in (\ref{big_identity_3})
\begin{align}
 \frac{\phi^3 f^i f^j p_{ij} f^m E_m}{\sqsq} =
\frac{\phi^4 f^l E_l f^i f^j \nabla_{ij}f }{(\sqsq)^\frac{3}{2}}
+ \frac{2 \phi^3 f^l E_l |\nabla f|_g^2 \phi^m f_m  }{(\sqsq)^\frac{3}{2}}
+ \frac{ \phi^5 f^l E_l |\nabla f|_g^4 \phi^m f_m  }{(\sqsq)^\frac{3}{2}},
\label{f_i_f_j_h_ij_term}
\end{align}
and observe that the first term on the right-hand side of (\ref{f_i_f_j_h_ij_term}) cancels the sixth term on the
right-hand side of (\ref{big_identity_3}), while the last term in (\ref{f_i_f_j_h_ij_term})
cancels the next to last term in (\ref{big_identity_3}). Therefore
\begin{align}
\begin{split}
 \overline{\dd} \, \overline{E} & =
\frac{1}{\sq} \dd E
+ \phi^2 f_i \nabla^i \left( \frac{f^l E_l }{ \sq} \right)
-\frac{1}{2} \frac{\phi^2 g^{ij}   E_i  \partial_j|\nabla f|^2_g }{(\sqsq)^\frac{3}{2}} \\
& - \frac{\phi^2 f^i f^j \nabla_j E_i }{(\sqsq)^\frac{3}{2}}
 - \frac{\phi^2 |\nabla f|_g^2}{\sqsq} f_i\nabla^i \left( \frac{\phi^2 f^l E_l }{\sq} \right)  \\
 & -\frac{\phi^2 f^i f^j}{\sqsq} \partial_j \left( \frac{1}{\sq} \right) E_i
   + \frac{2 \phi^3  f^l E_l \phi^m f_m |\nabla f|^2_g}{(\sqsq)^\frac{3}{2}} .
\end{split}
\label{big_identity_4}
\end{align}

Next, consider the second, fourth, and fifth terms on the right-hand side of (\ref{big_identity_4}). They give
\begin{align}
\begin{split}
& \phi^2 f_i \nabla^i \left( \frac{f^l E_l }{ \sq} \right)
- \frac{\phi^2 f^i f^j \nabla_j E_i }{(\sqsq)^\frac{3}{2}}
- \frac{\phi^2 |\nabla f|_g^2}{\sqsq} f_i\nabla^i \left( \frac{\phi^2 f^l E_l }{\sq} \right)   \\
& =
\frac{\phi^2 f^l f_i \nabla^i E_l }{\sq }
+ \phi^2 f_i \nabla^i \left(\frac{f^l}{\sq}\right) E_l -
\frac{\phi^2 f^i f^j \nabla_j E_i }{(\sqsq)^\frac{3}{2} } \\
& - \frac{\phi^2 |\nabla f|_g^2 }{\sqsq} \frac{\phi^2 f^l}{\sq} (\nabla^i E_l ) f_i
- \frac{\phi^2 |\nabla f|_g^2 }{\sqsq} \nabla^i \left( \frac{\phi^2 f^l }{\sq} \right) E_l f_i .
\end{split}
\label{2_4_5}
\end{align}
The first, third, and fourth terms on the right-hand side cancel
\begin{align}
\begin{split}
& \frac{\phi^2 f^l f_i \nabla^i E_l }{\sq }
- \frac{\phi^2 f^i f_j \nabla^j E_i }{(\sqsq)^\frac{3}{2} }
- \frac{\phi^2 |\nabla f|_g^2 }{\sqsq} \frac{\phi^2 f^l}{\sq} (\nabla^i E_l ) f_i  \\
& =
\frac{ \phi^2 f^l f_i \nabla^i E_l }{\sq} \left( 1 - \frac{1}{\sqsq} - \frac{\phi^2 |\nabla f|_g^2 }{\sqsq} \right)
= 0,
\end{split}
\label{equal_zero}
\end{align}
whereas the second and fifth terms of (\ref{2_4_5}) become
\begin{align}
\begin{split}
& \phi^2 f_i \nabla^i \left(\frac{f^l}{\sq}\right) E_l
- \frac{\phi^2 |\nabla f|_g^2 }{\sqsq} \nabla^i \left( \frac{\phi^2 f^l }{\sq} \right) E_l f_i \\
  &  = \phi^2 f_i \nabla^i \left(\frac{f^l}{\sq}\right) E_l
- \frac{ \phi^4 |\nabla f|_g^2}{\sqsq} f_i \nabla^i \left(\frac{ f^l}{\sq} \right) E_l
 - \frac{\phi^2 |\nabla f|_g^2 f^l E_l f_i \nabla^i \phi^2 }{(\sqsq) \sq } \\
& = \frac{ \phi^2 }{\sqsq} f_i \nabla^i \left( \frac{f^l }{\sq}\right) E_l
- \frac{2 \phi^3 f^l E_l \phi^i f_i |\nabla f|_g^2}{(\sqsq)^\frac{3}{2}  }.
\end{split}
\label{other_two}
\end{align}
Plugging (\ref{2_4_5}), (\ref{equal_zero}), and (\ref{other_two}) into (\ref{big_identity_4}) produces
\begin{align}
\begin{split}
 \overline{\dd} \, \overline{E} & =
\frac{1}{\sq} \dd E
+ \frac{ \phi^2 }{\sqsq} f_i \nabla^i \left( \frac{f^l }{\sq}\right) E_l
-\frac{1}{2} \frac{\phi^2 g^{ij}   E_i  \partial_j|\nabla f|^2_g }{(\sqsq)^\frac{3}{2}} \\
&  -\frac{\phi^2 f^i f^j}{\sqsq} \partial_j \left( \frac{1}{\sq} \right) E_i ,
\nonumber
\end{split}
\end{align}
after canceling the last term in (\ref{other_two}) with the last term in (\ref{big_identity_4}).

Lastly, expand the derivative in the second term on the right to find
\begin{align}
\begin{split}
 \overline{\dd} \, \overline{E}
& =
\frac{1}{\sq} \dd E
+ \frac{\phi^2 f_i }{\sqsq} \left( f^l \nabla^i \left( \frac{1}{\sq} \right) + \frac{1}{\sq} \nabla^i f^l \right )E_l \\
& -\frac{1}{2} \frac{\phi^2 E_i  \nabla^i(|\nabla f|^2_g )}{(\sqsq)^\frac{3}{2}}
 -\frac{\phi^2 f^l E_l }{\sqsq} f^j \nabla_j \left( \frac{1}{\sq} \right) \\
& =
\frac{1}{\sq} \dd E
+ \frac{ \phi^2 E_l f_i \nabla^i f^l }{(\sqsq)^\frac{3}{2} }
 -\frac{1}{2} \frac{\phi^2 E_i  \nabla^i|\nabla f|^2_g }{(\sqsq)^\frac{3}{2}} .
\end{split}
\nonumber
\end{align}
The last two terms cancel, yielding the desired result.
\end{proof}

\begin{lemma}
If $\phi$ remains bounded and $f(x)\rightarrow 0$ as $|x|\rightarrow\infty$, then $e=\overline{e}$.
\end{lemma}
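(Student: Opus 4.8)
The plan is to show that, for every large coordinate sphere $S_r\subset\cM$, the flux of $E$ through $S_r$ equals the flux of $\overline E$ through the vertical lift $\cS_r$ of $S_r$ to $\Sigma$; letting $r\to\infty$ then gives $e=\overline e$, because $f\to0$ and $\phi$ bounded guarantee that the $\cS_r$ exhaust the asymptotic end of $(\Sigma,\overline g)$ in the sense required to compute $\overline e$. The mechanism is that both fluxes are $\int\star_{\widehat g}F$ for one and the same $2$-form on the warped product $(\cM\times\RR,\,\widehat g)$, $\widehat g=g+\phi^2\,dt^2$, and that this $2$-form turns out to be independent of the warping factor.

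First I would compute $\star_{\widehat g}F$. Writing $F=(\phi\,dt)\wedge(E_i\,dx^i)$ and noting that $E_i\,dx^i=E^\flat$ is the $g$-dual of the vector field $E$, hence a $t$-independent $1$-form pulled back from $\cM$, a short computation in an orthonormal coframe adapted to the splitting (in which $\phi\,dt$ is a unit covector) shows that the factor $\phi$ cancels and $\star_{\widehat g}F=\pi^{*}\big(\star_{g}E^\flat\big)$, where $\pi:\cM\times\RR\to\cM$ is the projection. In particular $\star_{\widehat g}F$ is $\pi$-basic, so its integral over any $2$-surface $\cS$ equals its integral over $\pi(\cS)$.

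Second I would use the standard identity: for a spacelike hypersurface $V\subset(\cM\times\RR,\widehat g)$ with unit normal $n$ and a closed oriented $2$-surface $\cS\subset V$ with $V$-unit normal $\nu$, one has (with orientations fixed consistently) $\int_\cS\langle F(n,\cdot),\nu\rangle\,dA_V=\int_\cS\star_{\widehat g}F$. Applied to $V=\cM\times\{0\}$, $n=\phi^{-1}\partial_t$ (for which $F(n,\cdot)=E$), $\cS=S_r$, this gives $\int_{S_r}\langle E,\nu_g\rangle\,dA_g=\int_{S_r}\star_{\widehat g}F$. Applied to $V=\Sigma$, $n=N$ (for which $F(N,\cdot)|_{T\Sigma}=\overline E$ by the appendix), $\cS=\cS_r$, and using that $\pi|_{\cS_r}:\cS_r\to S_r$ is an orientation-preserving diffeomorphism together with the basicness of $\star_{\widehat g}F$, this gives $\int_{\cS_r}\langle\overline E,\nu_{\overline g}\rangle\,dA_{\overline g}=\int_{\cS_r}\star_{\widehat g}F=\int_{S_r}\star_{\widehat g}F$. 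Hence $\int_{\cS_r}\langle\overline E,\nu_{\overline g}\rangle\,dA_{\overline g}=\int_{S_r}\langle E,\nu_g\rangle\,dA_g$ for every $r$; dividing by $4\pi$ and letting $r\to\infty$ yields $\overline e=e$.

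I expect the main obstacle to be bookkeeping rather than substance: getting the orientation conventions for $\star_{\widehat g}$, for $\nu_g$ and $\nu_{\overline g}$, and for the two choices of $n$ to line up so that the two applications of the flux identity come out with the same sign (the lifting cylinder $\{(x,sf(x)):x\in S_r,\,s\in[0,1]\}$, on which $\pi^{*}(\star_g E^\flat)$ obviously integrates to zero, is a convenient way to see the orientations are compatible), and carrying out the short but convention-sensitive computation of $\star_{\widehat g}F$ in the warped metric. A purely computational alternative --- substituting \eqref{definition_E_bar} directly into $\int_{\cS_r}\langle\overline E,\nu_{\overline g}\rangle\,dA_{\overline g}$ and passing to the limit using $f\to0$ and the established fall-off \eqref{12} of $\nabla f$ together with $\phi$ bounded --- also works and is closer in spirit to the second proof of \eqref{31}, but the argument above is cleaner and makes transparent why the hypotheses ``$\phi$ bounded, $f\to0$'' are all that is needed.
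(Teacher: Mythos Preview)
Your proposal is correct, and it takes a genuinely different route from the paper's proof. The paper dispatches the lemma in one line: ``This follows directly from the definition of total charge and the formula \eqref{E_bar_Riemmanian}.'' The implied argument is the direct computation you describe at the end of your proposal as an alternative --- plug \eqref{definition_E_bar} into the flux integral and use the asymptotic decay of $\nabla f$ together with boundedness of $\phi$ to see that the correction terms vanish in the limit.

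Your main argument is more conceptual: you recognize both fluxes as integrals of the single $2$-form $\star_{\widehat g}F$, verify that this form is $\pi$-basic (the factor of $\phi$ cancels), and conclude that the flux through $S_r$ equals the flux through its vertical lift $\cS_r$ \emph{for every} $r$, not merely in the limit. This is strictly stronger than what the paper records and fits naturally with the first (conceptual) proof of Lemma~A.2, where the charge density identity $\dd E = \sqrt{1+\phi^2|\nabla f|_g^2}\,\overline{\dd}\,\overline E$ is derived from the spacetime current $\mathcal J$; indeed your flux equality is the integrated counterpart of that pointwise relation. The hypotheses $\phi$ bounded and $f\to 0$ then enter only to ensure that the lifted spheres $\cS_r$ are admissible for defining $\overline e$, exactly as you say. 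The trade-off is that your argument requires care with orientations and the flux identity in the Riemannian warped product, while the paper's one-line computation is immediate once \eqref{definition_E_bar} is in hand.
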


\begin{proof}
This follows directly from the definition of total charge and the formula \eqref{E_bar_Riemmanian}.
\end{proof}

\section{Lorentzian case\label{Lorentzian_case}}

In this section we will establish \eqref{hat_E_i_equal_E_i}. As in the previous section, the validity of the results in this appendix is independent
of any equation that $f$ may satisfy.

Consider the Lorentzian static spacetime $(\Si \times \RR, \, \overline{g} - \phi^2 \, dt^2)$ with electromagnetic field strength
\begin{gather}
F = \left(
 \begin{array}{cccc}
  0 & \phi \overline{E}_1 & \phi \overline{E}_2 & \phi \overline{E}_3 \\
- \phi \overline{E}_1 & 0 & 0 & 0 \\
- \phi \overline{E}_2 & 0 & 0 & 0 \\
- \phi \overline{E}_3 & 0 & 0 & 0
 \end{array}
\right).
\nonumber
\end{gather}
As usual $\overline{g}=g+\phi^{2}df^{2}$, and $\overline{E}$ is the electric field on $\Sigma$ given by \eqref{E_bar_Riemmanian}. We will regard $\mathcal{M}=\{t=f(x)\}$ as
a graph inside this spacetime, since the induced metric on the graph is $g$. Note that in Section \ref{case_k_not_zero} this surface was denoted by
$I(\mathcal{M})$. Denote by $\widehat{E}$ the electric field induced on $\cM$, as explained at the beginning of the appendix.

\begin{lemma}
If $E$ is the electric field found in the definition of $\overline{E}$, then
\begin{equation*}\label{40}
E=\widehat{E}.
\end{equation*}
\end{lemma}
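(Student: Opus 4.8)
The plan is to compute $\widehat E_i=F(\widehat N,X_i)$ directly and watch it collapse back to $E_i$; the mechanism is that swapping the Riemannian warped product for the Lorentzian static one flips the sign under the radical, and this is exactly what inverts the factor $\sq$ so that the formula \eqref{E_bar_Riemmanian} undoes itself.

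First I would set up the geometry of $\cM=\{t=f(x)\}$ inside $(\Si\times\RR,\,\overline g-\phi^2\,dt^2)$. Its tangent spaces are spanned by $X_i=\partial_i+f_i\partial_t$, and since $\overline g=g+\phi^2 df^2$ the induced metric is $\overline g-\phi^2 df^2=g$, as already noted in the text. I would then verify that
\[
\widehat N=\frac{\phi^{-1}\partial_t+\phi\,\overline g^{il}f_l\partial_i}{\sqrt{1-\phi^2|\nabla f|^2_{\overline g}}}
\]
is the unit timelike normal: using $\langle\partial_t,\partial_t\rangle=-\phi^2$, both the orthogonality to each $X_j$ and $\langle\widehat N,\widehat N\rangle=-1$ are one-line checks.

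Next, feeding $\widehat N$ and $X_i$ into $F$ and using $F_{0i}=\phi\overline E_i$, $F_{00}=0$, $F_{ij}=0$ gives
\[
\widehat E_i=F(\widehat N,X_i)=\frac{\overline E_i-\phi^2 f_i\,\overline g^{jl}f_j\overline E_l}{\sqrt{1-\phi^2|\nabla f|^2_{\overline g}}}.
\]
Now come two reductions, each using the expression \eqref{induced_g_inverse} for $\overline g^{ij}$. Contracting \eqref{induced_g_inverse} with $f_j$ shows $\overline g^{jl}f_j=f^l/\sqsq$, whence $|\nabla f|^2_{\overline g}=|\nabla f|^2_g/\sqsq$ and therefore $\sqrt{1-\phi^2|\nabla f|^2_{\overline g}}=1/\sq$. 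Pairing $\overline g^{jl}f_j=f^l/\sqsq$ with $\overline E_l$ from \eqref{E_bar_Riemmanian} and telescoping the factor $\sqsq$ gives $\overline g^{jl}f_j\overline E_l=(f^l E_l)/\sq$ (equivalently $\overline E^i=E^i/\sq$, indices raised with the respective metrics). Substituting both into the displayed formula,
\[
\widehat E_i=\sq\left(\frac{E_i+\phi^2 f_i f^l E_l}{\sq}-\phi^2 f_i\,\frac{f^l E_l}{\sq}\right)=E_i,
\]
which is the assertion.

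The only real obstacle is bookkeeping — keeping straight which of $g$ and $\overline g$ raises each index and which $|\nabla f|$ is meant — since there is no analytic difficulty, the change of signature doing all the work. Conceptually this is the statement that the two ``induce an electric field'' operations, Riemannian $\cM\to\Si$ and Lorentzian $\Si\to\cM$, are mutually inverse, which is precisely why the auxiliary field $\overline E$ carried on the Jang surface remembers the original datum $E$ once the equality case forces the whole configuration to be Reissner--Nordstrom.
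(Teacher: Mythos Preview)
Your proof is correct and follows essentially the same route as the paper: compute $\widehat E_i=F(\widehat N,X_i)$ in terms of $\overline E$, then reduce using $\overline g^{jl}f_j=f^l/(\sqsq)$ together with the identity $(1-\phi^2|\nabla f|^2_{\overline g})(1+\phi^2|\nabla f|^2_g)=1$. The only cosmetic difference is that you simplify the radical first and then cancel, whereas the paper saves the product-equals-one identity for the final line.
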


\begin{proof}
The unit normal to $\cM$ is given by
\begin{equation*}
N = \frac{ \phi^{-1} \partial_t  + \phi f^{\bar{l}} \partial_l }{\sqrt{ 1 - \phi^2 |\nabla f|_{\overline{g}}^2} },
\end{equation*}
where a barred index indicates that it is raised with respect to the $\overline{g}$ metric, that is
$f^{\bar{l}} = \overline{g}^{lj} f_j$.
Compute
\begin{align}
\begin{split}
 \widehat{E}_i & = F(N , X_i) =
F\left( \frac{ \phi^{-1} \partial_t  + \phi f^{\bar{l}} \partial_l }{\sqrt{ 1 - \phi^2 |\nabla f|_{\overline{g}}^2} } ,
\partial_i + f_i \partial_t \right) \\
& = \frac{1}{ \sqrt{ 1 - \phi^2 |\nabla f|_{\overline{g}}^2} }
\left( \phi^{-1} F(\partial_t, \partial_i) + \phi^{-1} f_i F(\partial_t, \partial_t )
+ \phi f^{\bar{l}} F(\partial_l, \partial_i) + \phi f^{\bar{l}} f_i F(\partial_l, \partial_t) \right),
\end{split}
\nonumber
\end{align}
and use $F(\partial_t, \partial_i) = \phi \overline{E}_i$,
$F(\partial_t, \partial_t) = F(\partial_l, \partial_i) = 0$, and
$F(\partial_l, \partial_t) = - \phi \overline{E}_i$ to show that
\begin{align}
\begin{split}
 \widehat{E}_i
& = \frac{1}{ \sqrt{ 1 - \phi^2 |\nabla f|_{\overline{g}}^2} }
\left( \overline{E}_i - \phi^2 f_i f^{\bar{l}}\,  \overline{E}_l  \right).
\end{split}
\label{hat_E_i}
\end{align}
The goal is to express everything in terms of unbarred quantities.
Observe that
\begin{align}
 \begin{split}
f^{\bar{l}}\,  \overline{E}_l
& = \overline{g}^{lj} f_l \overline{E}_j   \\
& = \left( g^{lj} - \frac{\phi^2 f^l f^j }{1+ \phi^2 |\nabla f|_g^2 } \right) f_l
\left( \frac{E_{j}+\phi^{2} f_j f^k E_{k}}{ \sqrt{1+\phi^{2}|\nabla f|_g^{2}} } \right) \\
& =
\frac{1}{ \sqrt{1+\phi^{2}|\nabla f|_g^{2}} } f^l E_l .
 \end{split}
\label{f_E_bar_g_bar}
\end{align}
Now input (\ref{f_E_bar_g_bar}) and (\ref{E_bar_Riemmanian}) into
(\ref{hat_E_i}) to find
\begin{align}
\begin{split}
 \widehat{E}_i
& = \frac{1}{ \sqrt{ 1 - \phi^2 |\nabla f|_{\overline{g}}^2 } }
\left(
\frac{E_{i}+\phi^{2} f_i  f^l E_{l} }{ \sqrt{1+\phi^{2}|\nabla f|_g}^{2} }
- \phi^2 f_i \frac{1}{ \sqrt{1+\phi^{2}|\nabla f|_g^{2}} } f^l E_l     \right) \\
& = \frac{E_i}{ \sqrt{ ( 1 - \phi^2 |\nabla f|_{\overline{g}}^2 ) (1+\phi^{2}|\nabla f|_g^{2}) } } .
\nonumber
\end{split}
\end{align}
Lastly, note that
\begin{align}
\begin{split}
 ( 1 - \phi^2 |\nabla f|_{\overline{g}}^2 ) (1+\phi^{2}|\nabla f|_g^{2}) & =
 \left(1-\phi^{2}\left(g^{ij}-\frac{\phi^{2}f^{i}f^{j}}{1+\phi^{2}|\nabla f|_{g}^{2}}\right)f_{i}f_{j}\right)(1+\phi^{2}|\nabla f|_g^{2}) \\
 &=\left(1-\frac{\phi^{2}|\nabla f|_{g}^{2}}{1+\phi^{2}|\nabla f|_{g}^{2}}\right)(1+\phi^{2}|\nabla f|_g^{2}) \\
 &=1.
\nonumber
\end{split}
\end{align}

\end{proof}

\end{document}